\theoremstyle{plain}
\newtheorem{thm}{Theorem}
\newtheorem{lem}{Lemma}
\newtheorem{prop}{Proposition}
\newtheorem{defn}{Definition}
\newtheorem{cor}{Corollary}
\newtheorem{clm}{Claim}
\theoremstyle{definition}
\newtheorem*{rem}{Remark}
\newcommand{\disp}{\displaystyle}
\DeclareMathOperator{\di}{div}
\DeclareMathOperator{\curl}{curl}
\DeclareMathOperator{\tr}{tr}
\newcommand{\eps}{\varepsilon}
\newcommand{\vp}{\varphi}
\newcommand{\al}{\alpha}
\newcommand{\be}{\beta}
\newcommand{\ga}{\gamma}
\newcommand{\de}{\delta}
\newcommand{\te}{\theta}
\newcommand{\la}{\lambda}
\newcommand{\om}{\omega}
\newcommand{\nid}{\noindent}
\newcommand{\iny}{\infty}
\newcommand{\del}{ \partial}
\newcommand{\su}{\subset}
\newcommand{\LP}{\Delta}
\newcommand{\gr}{\nabla}
\newcommand{\norm}[1]{\left\| #1\right\|}
\newcommand{\abs}[1]{\left\vert#1\right\vert}
\newcommand{\set}[1]{\left\{#1\right\}}
\newcommand{\brac}[1]{\left[#1\right]}
\newcommand{\pr}[1]{\left( #1 \right) }
\newcommand{\pb}[1]{\left( #1 \right] }
\newcommand{\der}[2]{\frac{\del #1}{\del #2}}
\newcommand{\N}{\ensuremath{\mathbb{N}}}
\newcommand{\R}{\ensuremath{\mathbb{R}}}
\newcommand{\C}{\ensuremath{\mathbb{C}}}
\newcommand{\Keywords}[1]{\par\noindent 
{\small{\bf Keywords\/}: #1}}
\newcommand{\MSC}[1]{\par\noindent 
{\small{\bf Mathematics Subject Classification\/}: #1}}
\title{On Landis' conjecture in the plane when the potential \\ has an exponentially decaying negative part}
\author[Davey]{Blair Davey}
\address{Department of Mathematics, City College of New York CUNY, New York, NY 10031, USA}
\email{bdavey@ccny.cuny.edu}
\thanks{Davey is supported in part by the Simons Foundation Grant 430198.}
\author[Kenig]{Carlos Kenig}
\address{Department of Mathematics, University of Chicago, Chicago, IL 60637, USA}
\email{cek@math.uchicago.edu}
\thanks{Kenig is supported in part by NSF DMS-1265249.}
\author[Wang]{Jenn-Nan Wang}
\address{Institute of Applied Mathematical Sciences, NCTS, National Taiwan University,
Taipei 106, Taiwan}
\email{jnwang@math.ntu.edu.tw}
\thanks{Wang is supported in part by MOST 105-2115-M-002-014-MY3.}
\date{}
\begin{document}

\maketitle

\begin{abstract}
In this article, we continue our investigation into the unique continuation properties of real-valued solutions to elliptic equations in the plane.
More precisely, we make another step towards proving a quantitative version of Landis' conjecture by establishing unique continuation at infinity estimates for solutions to equations of the form $- \LP u +  V u = 0$ in $\R^2$, where $V = V_+ - V_-$, $V_+ \in L^\iny$, and $V_-$ is a non-trivial function that exhibits exponential decay at infinity.
The main tool in the proof of this theorem is an order of vanishing estimate in combination with an iteration scheme.
To prove the order of vanishing estimate, we establish a similarity principle for vector-valued Beltrami systems.
 \\

\Keywords{Landis' conjecture; quantitative unique continuation; order of vanishing; vector-valued Beltrami system} \\

\MSC{35B60, 35J10} 
\end{abstract}

\section{Introduction}

In this paper, we consider the unique continuation properties of real-valued solutions to equations of the form
\begin{align}
- \LP u +  V u = 0
\label{ePDE}
\end{align}
in $\R^2$.
We assume that $V = V_+ - V_-$ where $V_\pm \ge 0$ satisfies
\begin{align}
& \norm{V_+}_{L^\iny\pr{\R^2} } \le 1
\label{V+Cond} \\
& V_-\pr{z} \le \exp\pr{- c_0 \abs{z}^{1 + \eps_0}} \quad \forall z \in \R^2,
\label{V-Cond}
\end{align}
for some $\eps_0 > 0$.
The main result of this article is the following quantitative form of Landis' conjecture for solutions to \eqref{ePDE}.

\begin{thm}
Assume that $V : \R^2 \to \R$ satisfies \eqref{V+Cond} and \eqref{V-Cond}.
Let $u: \R^2 \to \R$ be a solution to \eqref{ePDE} for which
\begin{align}
& \abs{u\pr{z}} \le \exp\pr{C_0 \abs{z}}
\label{uBd} \\
& \abs{u\pr{0}} \ge 1.
\label{normed}
\end{align}
Then for any $\epsilon > 0$ and any $R \ge R_0\pr{C_0, c_0, \eps_0, \eps}$, we have
\begin{equation}
\inf_{\abs{z_0} = R} \norm{u}_{L^\iny\pr{B_1\pr{z_0}}} \ge \exp\pr{- R^{1+\eps}}.
\label{globalEst}
\end{equation}
\label{LandisThm}
\end{thm}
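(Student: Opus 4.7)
The plan is to combine an interior order-of-vanishing estimate with an iteration scheme, in the spirit of the framework developed for Landis' conjecture in the plane. The starting point is to convert \eqref{ePDE} into a first-order Beltrami-type system via the complex derivative $f = \partial u = \tfrac12(\partial_{x_1} - i\partial_{x_2})u$, which satisfies $\bar\partial f = \tfrac14 \Delta u = \tfrac{V}{4} u$. Together with $u$ itself, this assembles into a vector-valued Beltrami system $\bar\partial \bv = A \bv + B \bar\bv$ whose coefficient matrices are controlled by $\norm{V_+}_{L^\iny}$ and the rapidly decaying $V_-$.

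The first key technical step is a similarity principle for this vector-valued Beltrami system: any solution admits a representation $\bv = e^{\Phi}\bh$, where $\bh$ is vector-valued holomorphic and $\Phi$ is a matrix whose entries are controlled in suitable integrability norms on disks. Following the Bers-Vekua strategy adapted to systems, I would obtain $\Phi$ by solving a matrix analogue of $\bar\partial \Phi = A + B\,\overline{\bv}\,\bv^{-1}$ via a fixed-point argument in a Sobolev-type space on a disk of radius comparable to $R$, with careful quantitative tracking of the dependence on $\norm{V_+}_{L^\iny}$ and on the radius.

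With the similarity principle in place, I combine $\bv = e^{\Phi}\bh$ with the growth hypothesis \eqref{uBd} and the normalization \eqref{normed}. Since a vector-valued holomorphic function of exponential type on $B_{2R}$ with a fixed lower bound at the origin cannot vanish to order higher than $O(R)$ at interior points, and since the matrix $\Phi$ contributes only in a controlled fashion, this would yield a quantitative order-of-vanishing bound $N(R)$ at every $z_0$ with $\abs{z_0} = R$. The super-exponential decay \eqref{V-Cond} is crucial here: it guarantees that on unit balls centered at such $z_0$ the equation is effectively governed by a bounded potential, so that the local estimates and three-ball-type inequalities apply uniformly in $z_0$.

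The iteration scheme then closes the argument. Supposing for contradiction that $\norm{u}_{L^\iny(B_1(z_0))} < \exp(-R^{1+\eps})$ for some $\abs{z_0} = R$, a chain of three-ball/Hadamard inequalities along a path from $z_0$ back toward the origin propagates this smallness to $B_1(0)$ and contradicts \eqref{normed}, provided $R$ is larger than a threshold $R_0(C_0, c_0, \eps_0, \eps)$. I expect the principal obstacle to lie in the similarity principle for the vector-valued system: the scalar Bers-Vekua argument does not transfer verbatim, and effective quantitative bounds on $\Phi$ are needed in order to convert $\norm{V_+}_{L^\iny}$ and the decay exponent $\eps_0$ into the sharp target exponent $1+\eps$. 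A secondary delicate point is balancing the order-of-vanishing constant in Step~3 against the number of iterates in Step~4 so that the resulting chain of interpolation inequalities beats the putative decay rate on the right-hand side of \eqref{globalEst}.
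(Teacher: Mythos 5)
There is a genuine gap, and it sits exactly where you flag your ``principal obstacle.'' Your reduction $f=\partial u$, $\bar\del f=\tfrac{V}{4}u$ places the full potential into the coefficients of the vector Beltrami system, so after the Bourgain--Kenig rescaling those coefficients have size $\norm{V_+}_{L^\iny}\sim\la^2$. Any similarity principle $\bv=e^{\Phi}\bh$ then carries $\norm{\Phi}\gtrsim\la^2$ at best (the paper's matrix similarity principle, Proposition \ref{matrixSolProp}, is even lossier: $\exp\brac{CM^2(\log M)^2}$ in the coefficient size $M$), which feeds into the three-circle argument as a multiplicative constant $\exp(c\la^2)$ and yields an order of vanishing $\gtrsim\la^2$, i.e.\ a final bound no better than $\exp(-R^{2})$ --- far from \eqref{globalEst}. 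The missing idea is the positive multiplier: the paper first constructs a positive solution $\phi$ of $\LP\phi-V_\de\phi=0$ for the \emph{shifted} potential $V_\de=V+\de^2$ (sub/supersolutions require $V_\de\ge0$, which is where \eqref{V-Cond} enters: after rescaling to scale $T=(S/2)^{1/(1-\eps)}$ one needs $T^2\exp(-c_0(S/2-1)^{1+\eps_0})\le\de^2$ with $\de$ \emph{exponentially small} in $\la=T$, and this is possible precisely when $\eps<\eps_0/(1+\eps_0)$). Writing $v=u/\phi$ converts the zeroth-order term of size $\la^2$ into first-order coefficients $\bar\del\log\phi$ of size $\la$, and the surviving Beltrami matrix $G$ has norm only $C\sqrt\la/\log\la$ because its entries carry the factor $\de$. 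Your sketch assigns $V_-$ the wrong role (``effectively bounded on unit balls near $z_0$''); its decay is not a local regularity device but the quantitative input that makes the shifted-multiplier construction and the bound on $G$ possible.

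A second, independent gap is in your closing step. A single chain of three-ball inequalities from $z_0$ back to the origin is the Bourgain--Kenig argument and produces only $\exp(-cR^{4/3}\log R)$; it cannot reach the exponent $1+\eps$. The paper instead iterates over scales: starting from the $4/3$ bound, Proposition \ref{itProp} is applied repeatedly, each pass using the lower bound $\exp(-S^{\al_n})$ at radius $S_n$ as the normalization \eqref{localNorm} (with $p=\al_n(1-\eps_1)$) in Theorem \ref{OofV} at the larger radius $S_{n+1}$, which strictly improves the exponent to $\al_{n+1}=\al_n-\tfrac{\al_n-1}{2}\eps_1$; finitely many steps drive $\al_n$ below $\tfrac{1}{1-\eps_1}$ and a last application gives $1+\eps$. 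Without this self-improving loop --- and without the multiplier reduction that makes each single step strong enough --- the argument as proposed does not close.
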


This theorem improves upon the work in \cite{KSW15} (see also the subsequent results in \cite{DKW17} and \cite{DW17}) since we now allow for $V_-$ to be a non-trivial function.

To prove Theorem \ref{LandisThm}, we follow the usual approach and prove an order of vanishing estimate for a scaled version of equation \eqref{ePDE}.
Since the potential function exhibits decay at infinity, we combine the scaling argument first developed in \cite{BK05} with an iteration scheme similar to the one presented in \cite{Dav14} (and further developed in \cite{LW14}) to prove Theorem \ref{LandisThm}.

The notation $B_r\pr{z_0}$ is used to denote the ball of radius $r$ centered at $z_0 \in \R^2$.
The abbreviated notation $B_r$ will be used when the centre is understood from the context.
We also use the notation $Q_r\pr{z_0}$ to denote the cube of sidelength $2r$ centered at $z_0 \in \R^2$, and we may abbreviate the notation when it is clear from the context.
For the order of vanishing estimate, we consider solutions to \eqref{ePDE} in $Q_b$ for some $b > 1$.

\begin{thm}
Let $F$ be a function for which $1 \le F\pr{\la} \le \la$ for all $\la \ge 1$.
For some $\la \ge 1$, set $b = 1 + \frac 1 {F\pr{\la}}$. 
Assume that $\norm{V_+}_{L^\iny\pr{Q_b}} \le \la^2$ and that $\norm{V_-}_{L^\iny\pr{Q_b}} \le \de^2$, where 
\begin{equation}
\de = \frac{c_0 \sqrt \la}{\log \la} \exp\pr{-m \la}
\label{deBd}
\end{equation} 
for a constant $m > 0$ to be specified below.
Let $u$ be a real-valued solution to \eqref{ePDE} in $Q_b$ that satisfies, for some $p >0$,
\begin{align}
& \norm{u}_{L^\iny\pr{B_b}} \le \exp\pr{C_1 \la}
\label{localBd} \\
& \norm{u}_{L^\iny\pr{B_1}} \ge \exp\pr{- c_1 \la^p}.
\label{localNorm}
\end{align}
Then for any $r$ sufficiently small, 
\begin{equation}
\norm{u}_{L^\iny\pr{B_r}} \ge r^{C \la^q F\pr{\la}},
\label{localEst}
\end{equation}
where $q = \max\set{1, p}$ and $C$ depends on $c_0$, $C_1$, and $c_1$.
\label{OofV}
\end{thm}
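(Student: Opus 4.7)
The plan is to adapt the strategy of \cite{KSW15}: convert the real scalar equation \eqref{ePDE} on $Q_b$ into a first-order elliptic system for a vector-valued function $W$, apply a Beltrami-type similarity principle to factor $W$ as $e^{\Phi}H$ with $H$ holomorphic on $B_b$, and then read off the order of vanishing of $u$ at the origin from the Hadamard three-circles theorem applied to $H$. The extra factor $F(\la)$ in the exponent is produced by the narrowness of the annulus $B_b \setminus B_1$, since $b - 1 = 1/F(\la)$.

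First I would realize $u$ as the first component of a $\C^N$-valued function $W$ (likely $N = 2$) solving a vector-valued Beltrami system of the form
\[
\bar\del W = \Cal{A}\, \bar W \quad \text{in } Q_b,
\]
with $\|\Cal{A}\|_{L^\iny(Q_b)} \lesssim \la$. Getting coefficients of size $\la$ rather than the naive $\la^2$ requires a square-root factorization of the operator $-\LP + V_+$ in the spirit of \cite{KSW15}; the negative part $V_-$ enters $\Cal{A}$ as a perturbation of size $O(\de)$, and by the choice \eqref{deBd} contributes at most $O(1)$ to the $L^\iny$ norm of $\Cal{A}$, so it does not spoil the $\la$-scale.

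Next I would invoke the similarity principle for vector-valued Beltrami systems developed elsewhere in the paper to write $W = e^{\Phi}H$ on $B_b$, where $H$ is holomorphic and the matrix-valued factors satisfy $\|e^{\pm\Phi}\|_{L^\iny(B_b)} \le \exp(C\la)$. Feeding this into the hypotheses \eqref{localBd} and \eqref{localNorm} then yields two-sided bounds on $H$, namely
\[
\|H\|_{L^\iny(B_b)} \le \exp(C\la), \qquad \|H\|_{L^\iny(B_1)} \ge \exp(-C\la^q),
\]
where $q = \max\{1,p\}$.

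Finally, I would apply Hadamard's three-circles inequality (valid for the subharmonic function $\log|H|$) at the radii $r < 1 < b$. Using $\log b \sim 1/F(\la)$ and solving the resulting inequality for $\log\|H\|_{L^\iny(B_r)}$ gives
\[
\log \|H\|_{L^\iny(B_r)} \ge -C \la^q F(\la) \log(1/r)
\]
for all sufficiently small $r$; undoing the similarity transformation costs only an additional $\exp(C\la)$ factor, which is absorbed into $r^{C\la^q F(\la)}$ as $r \to 0$, yielding \eqref{localEst}. The main obstacle is proving the similarity principle in the vector-valued setting with the sharp $\exp(C\la)$ dependence on the coefficients of $\Cal{A}$; a secondary technical point is confirming that the precise size \eqref{deBd} of $V_-$ is indeed exponentially small enough to contribute negligibly to the Beltrami matrix, which is the reason the hypothesis is stated with $m$ large and the $\sqrt{\la}/\log\la$ prefactor.
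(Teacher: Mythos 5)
Your outline has the right overall shape --- positive multiplier, reduction to a vector-valued Beltrami system, similarity principle, Hadamard three circles, with the factor $F(\lambda)$ coming from the width $b-1 = 1/F(\lambda)$ of the annulus --- but there is a genuine gap at the central step. You reduce to a system $\bar\del W = \Cal{A}\bar W$ with $\norm{\Cal{A}}_{L^\iny}\lesssim\lambda$ and then assert that the vector-valued similarity principle produces factors bounded by $\exp\pr{C\lambda}$; that is, you are assuming the similarity transformation costs $\exp\pr{C\norm{\Cal{A}}_\iny}$, as it does in the scalar Beltrami setting via the Cauchy transform. The vector-valued similarity principle available here (Proposition \ref{matrixSolProp}, proved via Cartan's Lemma and Wiener--Masani) only yields $\norm{P}+\norm{P^{-1}}\le\exp\brac{CM^2\pr{\log M}^2}$ with $M=\norm{\Cal A}_\iny$. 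Applied to a coefficient matrix of size $\lambda$ this gives $\exp\pr{C\lambda^2\pr{\log\lambda}^2}$, which would ruin the vanishing order $\lambda^qF(\lambda)$ entirely. The actual proof is architected around this quadratic loss: the $O(\lambda)$ part of the first-order system, which comes from $\gr\log\phi$ and acts essentially diagonally, is stripped off by \emph{scalar} conjugations $e^{-T(2\al)}$ and $e^{-T(\al-\widetilde\al)}$, each costing only $\exp\pr{C\lambda}$; what survives is a purely off-diagonal coupling matrix $G$ whose entries are of size $\de\, e^{C\lambda}$. It is exactly the exponential smallness \eqref{deBd} of $\de$ that makes $\norm{G}_\iny\le C\sqrt\lambda/\log\lambda$ (Lemma \ref{GBd}), so that $M^2(\log M)^2\sim\lambda$ and the matrix similarity principle becomes affordable. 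So the role of \eqref{deBd} is not, as you suggest, that $V_-$ ``contributes negligibly'' to a $\lambda$-sized Beltrami matrix; rather, $\de$ \emph{is} the size of the entire genuinely vectorial coupling, and the theorem's hypothesis is calibrated so that this coupling is small enough for the (lossy) vector similarity principle.

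Two further points you would need to supply. First, since $V$ may be negative, the positive multiplier cannot be built for $-\LP+V$ or $-\LP+V_+$ directly; the paper constructs $\phi>0$ solving $\LP\phi - (V+\de^2)\phi=0$, and it is this shift that converts \eqref{ePDE} into the $\de$-divergence-free equation $\gr_\de\cdot(\phi^2\gr_\de v)=0$ and forces the stream function, hence the system, to be vector-valued. Second, the matrix similarity principle is proved only for systems of the form $\bar\del P = AP$ without conjugates; the conjugate terms in your $\bar\del W=\Cal A\bar W$ must first be absorbed pointwise (via factors such as $\overline{w_2}/w_2$) before the factorization $\vec w = P\vec h$ applies.
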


Since we are working with real-valued solutions and equations in the plane, we follow an approach that is based on the ideas first developed in \cite{KSW15}.
In particular, we rely on tools from complex analysis to prove our theorem.
In \cite{KSW15}, \cite{DKW17}, and \cite{DW17}, the first step in the proof of the order of vanishing estimate is to show the existence of a positive multiplier and establish good bounds for it.
Since the negative part of $V$ is now assumed to be non-trivial, our usual approach to establishing the existence of a positive multiplier breaks down.
Thus, we introduce a positive solution to an associated equation with a shifted potential function.
This positive function allows us to transform the PDE for $u$ into a divergence-form equation.
The resulting equation is not divergence-free, but it resembles a higher-dimensional divergence-free equation.
Therefore, we mimic ideas from the $3$-dimensional setting, and introduce a vector-valued stream function that gives rise to a vector-valued Beltrami system.

The main challenge that we overcome is understanding the quantitative behavior of solutions to vector-valued Beltrami equations.
In the scalar setting, an application of the similarity principle in combination with the Hadamard three-circle theorem allowed us to quantify all solutions to the resulting Beltrami system.
As a similarity principle with bounds was not available to us in the vector-valued setting, we prove one here using Cartan's Lemma, the Wiener-Masani Theorem, and the ideas from \cite{BR03}.
With this new similarity principle, we can prove our three-ball inequalities by applying the Hadamard three-circle theorem component-wise.

Each section in this article describes an important proof.
Section \ref{S3} gives the proof of Theorem \ref{OofV} where each major step is presented in a subsection.
The four steps in this proof are: the introduction of a positive multiplier and its properties, the reduction from the PDE to a vector-valued Beltrami system, the quantitative properties of solutions to vector-valued Beltrami systems, and the three-ball inequality.
The proof of Theorem \ref{LandisThm} is presented in Section \ref{Landis}.
We first present the proposition behind the iteration scheme, whose proof relies on the order of vanishing estimate given in Theorem \ref{OofV}. 
Then we repeatedly apply the proposition to prove Theorem \ref{LandisThm}.
Finally, Section \ref{P1Proof} presents the proof of an important proposition in the quantification of solutions to vector-valued Beltrami systems.

\section{The proof of Theorem \ref{OofV}}
\label{S3}

\subsection{The positive multiplier}

In \cite{KSW15} and \cite{DKW17}, the first step in the proofs of the order of vanishing estimates is to establish that a positive multiplier associated to the operator (or its adjoint) exists and has suitable bounds.
Since we are no longer working with a zeroth order term that is assumed to be non-negative, we  take a somewhat different approach here. 

Define
$$V_\de\pr{x, y} = V\pr{x, y} + \de^2.$$
It follows from the assumptions $\norm{V_-}_{L^\iny\pr{Q_b}} \le \de^2$, $\norm{V_+}_{L^\iny\pr{Q_b}} \le \la^2$, and $\la \ge 1 \ge \de$, that $0 \le V_\de \le 2 \la^2$ a.e. in $Q_b$.
Therefore, we may mimic the techniques from \cite{KSW15} and \cite{DKW17} to construct a positive multiplier associated with the equation
\begin{equation}
\LP \phi - V_\de \phi = 0 \quad \text{ in } Q_b.
\label{localEPDE2}
\end{equation}
Set $\phi_1\pr{x,y} = \exp\pr{\sqrt{2}\la x}$.  
Since 
$$\LP \phi_1 - V_\de \phi_1 = \pr{2\la^2 - V_\de} \phi_1 \ge 0,$$ 
then $\phi_1$ is a subsolution.
Set $\phi_2 = \exp\pr{\sqrt{8}\la}$ and notice that 
$$\LP \phi_2 - V_\de \phi_2 = - V_\de \phi_2 \le 0,$$ 
so $\phi_2$ is a supersolution.
Since $\phi_2 \ge \phi_1$ in $Q_b$, then there exists a positive solution $\phi$ to \eqref{localEPDE2} for which
\begin{equation}
\exp\pr{- \sqrt{8} \la} \le \phi \le \exp\pr{\sqrt{8} \la}
\label{phiBounds}
\end{equation}
in $Q_b$.
By the gradient estimate for Poisson's equation (as in \cite{GT01} for example), we have
\begin{align}
&\norm{\gr \phi}_{L^\iny\pr{B_r}} \le \frac{C_\al \la^2}{r} \norm{\phi}_{L^\iny\pr{B_{\al r}}},
\label{gradphiEst}
\end{align}
whenever $\al > 1$, $\al r < b$.
Note that $C_\al \sim \pr{\al -1}^{-1}$.
A similar estimate holds for $u$ as well.

We present an estimate similar to one in \cite{KSW15} that will be instrumental below.

\begin{lem}
Recall that $b  = 1 + \frac 1 {F\pr{\la}}$, where $1 \le F\pr{\la} \le \la$.
For $d =  1 + \frac 1 {2 F\pr{\la}}$, there is an absolute constant $C_2$ for which
$$\norm{\gr \pr{\log \phi}}_{L^\iny\pr{Q_{d}}} \le C_2 \la,$$
where $\phi$ is a positive solution to \eqref{localEPDE2}.
\label{grphiLem}
\end{lem}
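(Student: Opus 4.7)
The plan is to estimate $|\gr \log \phi| = |\gr \phi|/\phi$ pointwise by combining a Harnack inequality for positive solutions of \eqref{localEPDE2} with the Poisson gradient estimate, both applied on balls of radius $r \sim 1/\la$. The two-sided bound \eqref{phiBounds} alone is too crude: the gap between $e^{-\sqrt{8}\la}$ and $e^{\sqrt{8}\la}$ would contribute a factor of size $e^{O(\la)}$. Harnack on small balls, where $\|V_\de\|_{L^\iny} r^2$ is of order one, removes this exponential loss, and the linear-in-$\la$ bound then drops out.

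Fix $z_0 \in Q_d$. Because $F(\la) \le \la$, the distance from $Q_d$ to $\partial Q_b$ satisfies $b - d = \frac{1}{2F(\la)} \ge \frac{1}{2\la}$, so one may choose $r = c/\la$ with an absolute $c \in (0, 1/4)$ such that $B_{2r}(z_0) \su Q_b$. On $B_{2r}(z_0)$ the coefficient satisfies $0 \le V_\de \le 2\la^2$ and $\|V_\de\|_{L^\iny} (2r)^2 \le 8c^2$, so, after rescaling to the unit ball and applying the classical Harnack inequality for positive solutions of $\LP \phi - V_\de \phi = 0$ (for instance \cite[Theorem 8.20]{GT01}), one obtains
$$\sup_{B_{2r}(z_0)} \phi \le C_H \, \phi(z_0)$$
with $C_H$ an absolute constant. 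Next, a slightly sharper form of the Poisson gradient estimate than the one recorded in \eqref{gradphiEst} — one that retains the inhomogeneous term $\LP \phi = V_\de \phi$ rather than absorbing it into a crude $\la^2/r$ — gives
$$|\gr \phi(z_0)| \le C \pr{\frac{1}{r} + r\,\|V_\de\|_{L^\iny\pr{B_{2r}(z_0)}}} \sup_{B_{2r}(z_0)} \phi \le C\pr{\la/c + c\la} \sup_{B_{2r}(z_0)} \phi.$$
Combining the two bounds gives $|\gr \phi(z_0)| \le C_2 \la\, \phi(z_0)$, which upon division by $\phi(z_0)$ is exactly $|\gr(\log \phi)(z_0)| \le C_2 \la$; since $z_0 \in Q_d$ was arbitrary, the lemma follows.

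The main subtlety is making sure every constant that appears is independent of $\la$. This is engineered entirely by the choice $r = c/\la$: it keeps $\|V_\de\|_{L^\iny} r^2 = O(1)$ so that the Harnack constant is absolute, and it balances the two terms of the gradient estimate so that $1/r + r\la^2$ has size exactly $\la$. The hypothesis $F(\la) \le \la$ is precisely what guarantees that a ball of radius $2c/\la$ still fits inside $Q_b$ when centered at an arbitrary point of $Q_d$, so the argument is uniformly available throughout $Q_d$.
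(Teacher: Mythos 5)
Your proof is correct, but it takes a genuinely different route from the paper. The paper works with $\Phi = \log\phi$ directly: it derives the Riccati-type equation $\LP\Phi + \abs{\gr\Phi}^2 = V_\de$, proves a Caccioppoli-type $L^2$ bound $\norm{\gr\Phi}_{L^2} \le C\la$, rescales by $\mu \sim 1/\la$, establishes a local decay estimate $\int_{B_r}\abs{\gr\vp}^2 \le Cr^2$, upgrades to $L^p$ with $p>2$ via Gehring/reverse-H\"older (Giaquinta), and then bootstraps through $W^{2,p/2}$ estimates and Sobolev embedding to reach $L^\iny$. You instead bypass the logarithm entirely and combine two classical facts on balls of radius $r = c/\la$: the Harnack inequality for the positive solution $\phi$ of $\LP\phi - V_\de\phi = 0$ (uniform constant because $\norm{V_\de}_\iny r^2 = O(1)$ after rescaling --- note that in the normalization of \cite[Thm.~8.20]{GT01} one needs $\phi$ positive on the concentric ball of radius $8r$, so your constant $c$ must be shrunk slightly so that this larger ball fits in $Q_b$, which the bound $b - d \ge \frac1{2\la}$ still permits), and the sharp interior gradient estimate $\abs{\gr\phi(z_0)} \le C(r^{-1} + r\norm{V_\de}_\iny)\sup_{B_{2r}}\phi$, whose two terms are balanced by the choice $r \sim 1/\la$. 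Dividing by $\phi(z_0)$ gives the claim. Your argument is shorter and more transparent, and it makes visible that $\la$ enters only through the natural length scale $1/\la$ of the equation; the paper's argument is heavier but is the one that generalizes to settings (e.g.\ divergence-form operators with rough coefficients, as in the authors' related papers) where pointwise Harnack-plus-gradient reasoning at a fixed point is replaced by integral estimates. Both yield an absolute constant $C_2$, so either proof supports the way the lemma is used later (in Lemma \ref{GBd} and the three-ball inequality).
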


\begin{proof}
We begin with an $L^2$ estimate for $\Phi := \log \phi$.
Let $\te \in C_0^\iny\pr{Q_b}$ be a smooth cutoff function with $\te \equiv 1$ in $Q_{\tilde d}$, where $\tilde d = 1 + \frac 3 {4F\pr{\la}}$.
The assumption on $b$ implies that $b - \tilde d \ge \frac 1 {4F\pr{\la}}$ and therefore $\norm{\gr \te}_{L^\iny\pr{Q_b}} \le C F\pr{\la}$ and $\norm{\LP \te}_{L^\iny\pr{Q_b}} \le C \brac{F\pr{\la}}^2$.
It follows from \eqref{localEPDE2} that in $Q_b$
\begin{align}
\LP \Phi + \abs{\gr \Phi}^2 = V_\de.
\label{PhiPDE}
\end{align}
Multiplying both sides of this equation by $\te^2$ and integrating by parts, we see that
\begin{align*}
\int \abs{\gr \Phi}^2\te^2 
&= \int V_\de \te^2 + \int \gr\pr{ \te^2} \cdot \gr\Phi
\le \int V_\de \te^2 + \frac 1 2 \int \abs{\gr\Phi}^2 \te^2 + 2 \int \abs{\gr \te}^2.
\end{align*}
Therefore,
\begin{align*}
\int_{Q_{\tilde d}} \abs{\gr \Phi}^2 
&\le \int \abs{\gr \Phi}^2\te^2 
\le 2 \int V_\de \te^2 + 4 \int \abs{\gr \te}^2
\le C \pr{\la^2 + \brac{F\pr{\la}}^{2}},
\end{align*}
where we have used the bound on $V_\de$ and that $\tilde d \le \frac 7 4$.
Since $F\pr{\la} \le \la$, then $\norm{\gr \Phi}_{L^2\pr{Q_{\tilde d}}} \le C \la$, where $C$ is an absolute constant.

We rescale equation \eqref{PhiPDE}.
Set $\vp = \frac{\Phi}{C \la}$ for some $C > 0$.
Then \eqref{PhiPDE} is equivalent to 
\begin{equation}
\mu \LP \vp + \abs{\gr \vp}^2 = \widetilde V \,\,\, \text{ in } Q_d,
\label{vpPDE}
\end{equation}
where $\mu = \frac{1}{C \la}$ and $\widetilde V = \frac{V_\de}{C^2 \la^2}$.
Now choose $C$ sufficiently large so that
\begin{align}
\norm{\widetilde V}_{L^\iny\pr{Q_b}} \le 1, \quad 
\int_{Q_{\tilde d}} \abs{\gr \vp}^2 \le 1.
\label{scaleBds}
\end{align}

\begin{clm}
For any $z \in Q_{d}$, if $\mu \le c r$, $r < \frac 1 {8F\pr{\la}}$, and conditions \eqref{vpPDE} and \eqref{scaleBds} hold, then
$$\int_{B_r\pr{z}} \abs{\gr \vp}^2 \le C r^2.$$
\label{clm1}
\end{clm}

\begin{proof}[Proof of Claim \ref{clm1}]
We use the abbreviated notation $B_r$ to denote $B_r\pr{z}$ for some $z \in Q_d$.
Let $\eta \in C^\iny_0\pr{B_{2r}}$ be a cutoff function such that $\eta \equiv 1$ in $B_r$.
By the divergence theorem
\begin{align}
0 &= \mu \int \di \pr{\gr \vp \, \eta^2} 
= \mu \int \LP \vp \eta^2 + 2 \mu \int \eta  \gr \vp \cdot \gr \eta. 
\label{DivThmRes}
\end{align}
Now we estimate each term.
By \eqref{vpPDE} and \eqref{scaleBds},
\begin{align}
\int \mu \LP \vp \, \eta^2 
&= - \int \abs{\gr \vp}^2 \eta^2 + \int \widetilde V \eta^2
\le - \int \abs{\gr \vp}^2 \eta^2 + \norm{\widetilde V}_{L^\iny\pr{B_d}} \int_{B_{2r}} 1 \nonumber \\
&\le - \int \abs{\gr \vp}^2 \eta^2 + C r^2.
\label{IBd}
\end{align}
By Cauchy-Schwarz and Young's inequality,
\begin{align}
\abs{2 \mu \int \eta \gr \vp \cdot \gr \eta} 
&\le 2 \mu \pr{ \int \abs{\gr \vp}^2 \eta^2 }^{1/2} \pr{ \int \abs{\gr \eta}^2 }^{1/2} 
\le \frac{1}{2} \int \abs{\gr \vp}^2 \eta^2 + C \mu^2.
\label{IIBd}
\end{align}
Combining \eqref{DivThmRes}-\eqref{IIBd} and using that $\mu \le c r$, we see that
\begin{align}
\int_{B_r} \abs{\gr \vp}^2 
\le \int \abs{\gr \vp}^2 \eta^2
&\le C \mu^2 + C r^2 
\le C r^2, 
\label{combinedEst}
\end{align}
proving the claim.
\end{proof}

We now use Claim \ref{clm1} to give a pointwise bound for $\gr \vp$ in $Q_{d}$.
Define 
$$\vp_\mu\pr{z} = \frac{1}{\mu} \vp\pr{\mu z} .$$
Then
$$\gr \vp_\mu\pr{z} = \gr \vp \pr{\mu z}, \qquad  
\LP \vp_\mu\pr{z} = \mu \LP \vp \pr{\mu z}.$$
It follows from \eqref{vpPDE} that
\begin{equation*}
\LP \vp_\mu + \abs{\gr \vp_\mu}^2 = \widetilde V\pr{\mu z} : = \widetilde V_\mu\pr{z},
\end{equation*}
$$\norm{\widetilde V_\mu}_{L^\iny\pr{B_1}} \le 1.$$
Moreover,
\begin{align*}
\int_{B_{2}}\abs{\gr \vp\pr{\mu z}}^2 
= \frac{1}{\mu^2} \int_{B_{2 \mu}} \abs{\gr \vp}^2
\le \frac{1}{\mu^2} C \pr{2\mu}^2 = C,
\end{align*}
where we have used Claim \ref{clm1}.
It follows from Theorem 2.3 and Proposition 2.1 in Chapter V of \cite{Gi83} that there exists $p > 2$ such that
\begin{equation}
\norm{\gr \vp_\mu}_{L^p\pr{B_1}} \le C.
\label{QiaBd}
\end{equation}
Define
$$\tilde \vp_\mu \pr{z} = \vp_\mu\pr{z} - \frac{1}{\abs{B_1}} \int_{B_1} \vp_\mu.$$
Since $\gr \tilde \vp_\mu = \gr \vp_\mu$, then 
$$\LP \tilde \vp_\mu = - \abs{\gr \tilde \vp_\mu}^2 + \widetilde V_\mu : = \zeta \quad \text{ in } B_1.$$
Clearly, $\norm{\zeta}_{L^{p/2}\pr{B_1}} \le C$.
Moreover, by H\"older, Poincar\'e and \eqref{QiaBd},
$$\norm{\tilde \vp_\mu}_{L^{p/2}\pr{B_1}} 
\le C \norm{\tilde \vp_\mu}_{L^{p}\pr{B_1}} 
\le C \norm{\gr \tilde \vp_\mu}_{L^p\pr{B_1}} \le C.$$
By Theorem 9.9 from \cite{GT01}, for example,
$$\norm{\tilde \vp_\mu}_{W^{2, p/2}\pr{B_r}} \le C,$$
for any $r < 1$.
If $p > 4$, then it follows that $\norm{\gr \tilde \vp_\mu}_{L^\iny\pr{B_{r^\prime}}} \le C$.
Otherwise, assuming that $p < 4$, a Sobolev embedding shows that $\norm{\gr \tilde \vp_\mu}_{L^{\frac{2p}{4-p}}\pr{B_r}} \le C$.
Since $\frac{2p}{4-p} > p$, we may repeat these arguments to show that for some $r^\prime < 1$
$$ \norm{\gr \vp}_{L^\iny\pr{B_{ \mu r^\prime}}}  = \norm{\gr \vp_\mu}_{L^\iny\pr{B_{r^\prime}}}  =  \norm{\gr \tilde \vp_\mu}_{L^\iny\pr{B_{r^\prime}}} \le C.$$
This derivation works for any $z \in Q_{d}$ and any $\mu < \mu_0$. 
Since $\vp = \frac{\Phi}{C \la}$, the conclusion of the lemma follows.
\end{proof}

\subsection{Reduction to a vector-valued Beltrami equation}
Now we use the positive multiplier $\phi$ from above to reduce the PDE to a first-order Beltrami equation.
The novelty here is that the resulting equation is a vector equation instead of a scalar equation as it was in \cite{KSW15}.
With $u$ and $\phi$ satisfying \eqref{ePDE} and \eqref{localEPDE2}, respectively, we define
$$ v = \frac{u}{\phi},$$ 
and a computation shows that
\begin{equation}
\gr \cdot \pr{\phi^2 \gr v} + \de^2 \phi^2 v = 0 \quad \textrm{ in } Q_b.
\label{gradEq}
\end{equation}

\begin{defn}
For any $\de \in \R$, define the operator $\gr_\de = \pr{\del_x, \del_y, \de}$, where $\de$ denotes multiplication by $\de$.
That is, if $f$ is an arbitrary scalar function and ${\bf F} = \pr{F_1, F_2, F_3}$ is an arbitrary vector function, then 
\begin{align*}
&\gr_\de f = \pr{\del_x f, \del_y f, \de f} \\
&\gr_\de \cdot {\bf F} = \di_\de {\bf F} = \del_x F_1 + \del_y F_2 + \de F_3 \\
&\gr_\de \times {\bf F} = \curl_\de {\bf F} = \pr{\del_y F_3 - \de F_2, \de F_1 - \del_x F_3, \del_x F_2 - \del_y F_1}
\end{align*}
\end{defn}

With this new notation, \eqref{gradEq} may be rewritten as
\begin{equation}
\gr_\de \cdot \pr{\phi^2 \gr_\de v} = 0 \quad \textrm{ in } Q_b.
\label{lagradEq}
\end{equation}
Therefore, the positive multiplier $\phi$ for the related equation \eqref{ePDE} has been used to transform the PDE \eqref{ePDE} into a $\de$-divergence-free equation.

If we take the standard gradient, divergence and curl in $\R^3$, and replace $\del_z$ with multiplication by the constant $\de$, we get the operators $\gr_\de$, $\gr_\de \cdot$ and $\gr_\de \times$.
A number of the relationships between gradient, divergence and curl are inherited for these new operators.  
For example, $\gr_\de \cdot \pr{\gr_\de \times {\bf F} }= 0$, $\gr_\de \times \gr_\de f= \bf 0$, and $\gr_\de \times \pr{\gr_\de \times {\bf F} }= -\pr{\LP + \de^2}{\bf F} + \gr_\de \pr{\gr_\de \cdot {\bf F}}.$

The next step is to generalize the definition of the stream function given in \cite{KSW15}.
Since we have a $\de$-divergence-free vector field, $\phi^2 \gr_\de v$, the idea (that comes from the 3-dimensional setting) is to define a vector-valued function $\bf G$ that satisfies 
\begin{equation}
\gr_\de \times {\bf G} = \phi^2 \gr_\de v .
\end{equation}
That is, if ${\bf G} = \pr{v_1, v_2, v_3}$, then
\begin{equation}
\left\{\begin{array}{rl}
\del_y v_3 -  \de v_2 &=  \phi^2 \del_x v   \\
-\del_x v_3 + \de v_1 &=  \phi^2 \del_y v   \\
\del_x v_2 - \del_y v_1 &= \de \phi^2 v  \\
\end{array}  \right..
\label{streamSyst}
\end{equation}
Note that when $\de = 0$, this system reduces to the defining equations for the scalar stream function $v_3$.
When $\de \ne 0$, one possible solution to this system is obtained by setting $v_3 = 0$. 
That is, 
\begin{equation}
\left\{\begin{array}{rl}
 v_1 &:=  \de^{-1} \phi^2 \del_y v   \\
 v_2 &:=  - \de^{-1} \phi^2 \del_x v   \\
\end{array}  \right..
\label{vDef}
\end{equation}
Define
\begin{align}
\left\{\begin{array}{l}
w_1 := \phi^2 v \\
w_2 := v_2 + i  v_1
\end{array}\right..
\label{wDef}
\end{align}
With $\bar \del = \frac{\del}{\del \bar z} = \frac{1}{2}\pr{ \der{}{x} + i \der{}{y} }$, 
\begin{align*}
\bar \del w_1 
&= \bar\del\pr{\phi^2} v + \phi^2 \bar\del v
= 2 \bar\del\pr{\log \phi}  \phi^2 v + \frac{1}{2}\brac{\phi^2 \der{v}{x} + i \phi^2 \der{v}{y}} \\
&= 2 \bar\del\pr{\log \phi}  w_1 - \frac{\de}{2}\overline w_2,
\end{align*}
and
\begin{align*}
\bar \del w_2 
&=\bar \del v_2 + i \bar \del v_1
= \frac{1}{2}\brac{\der{v_2}{x} - \der{v_1}{y}} + \frac{i}{2}\brac{ \der{v_2}{y} + \der{v_1}{x}} \\
&= \frac{\de}{2} w_1 + \bar \del \pr{\log \phi} w_2 - \del \pr{\log \phi} \overline w_2.
\end{align*}
Set $\al = \bar{\del} \pr{\log \phi}$ so that $\overline \al = \overline{\bar{\del} \pr{\log \phi}} = \del \pr{\log \phi}$, since $\phi$ is real. 
We define
$$\widetilde \al = \left\{\begin{array}{ll} \overline{ \al} \frac{\overline w_2}{w_2} & \text{if } w_2 \ne 0 \\  0 & \text{otherwise}   \end{array}\right., \qquad
\widetilde \de = \left\{\begin{array}{ll} \de \frac{\overline w_2}{w_2} & \text{if } w_2 \ne 0 \\  0 & \text{otherwise}   \end{array}\right..$$
We use the notation $T = T_{Q_b}$ to denote the Cauchy-Pompeiu operator on $Q_b$.
More details can be found in the next subsection, but for now we rely on the property that $\bar \del T_{Q_b} f = f \chi_{Q_b}$.
It follows that
\begin{align*}
& \bar \del \pr{e^{-T\pr{2 \al}} w_1} 
= e^{-T\pr{2 \al}}\bar \del w_1 -2 \al  e^{-T\pr{2 \al}} w_1 
=  - \frac{\widetilde \de}{2} e^{-T\pr{2 \al}}  w_2 \\
& \bar \del  \pr{e^{-T\pr{\al - \widetilde \al}} w_2} 
= e^{-T\pr{\al - \widetilde \al}} \bar \del w_2 - \pr{\al - \widetilde \al} e^{-T\pr{\al - \widetilde \al}} w_2 = \frac{\de}{2} e^{-T\pr{\al - \widetilde \al}} w_1.
\end{align*}
If we set $\widetilde w_1 = e^{-T\pr{2 \al}} w_1$, $\widetilde w_2 = e^{-T\pr{\al - \widetilde \al}} w_2$, 
and introduce vector notation
$$\vec{w} = \brac{\begin{array}{c} \widetilde w_1 \\ \widetilde w_2 \end{array}} \quad \text{ and } \quad 
G = \brac{\begin{array}{cc} 0 & -\frac{\widetilde \de}{2} e^{-T\pr{ \al+ \widetilde \al}} \\ \frac{\de}{2} e^{T\pr{\al + \widetilde \al}} & 0 \end{array}},$$
then we have
\begin{equation}
\bar \del \vec{w} - G \vec{w} = \vec{0} \quad \text{ in }\, Q_d.
\label{vecEqn}
\end{equation}

\subsection{Solutions to Beltrami matrix equations}
Towards understanding the behavior of solutions to \eqref{vecEqn}, we study the behavior of matrix solutions to the equation
\begin{equation}
\bar \del P - A P = 0 \quad \text{ in }\, \mathcal{R} := \brac{0, 1} \times \brac{0, 1}.
\label{matEqn}
\end{equation}

For a $2 \times 2$ matrix $A$, recall that 
$$\abs{A}^2 = \tr\pr{A^* A},$$
where $A^*$ is the Hermitian adjoint of $A$.
We use the notation $\norm{\cdot}$ to denote the operator norm of a matrix.
Observe that for $2 \times 2$ matrices $A$ and $B$, 
\begin{align*}
&\abs{AB} \le \abs{A} \norm{B} \\
&\norm{A} \le \abs{A} \le \sqrt 2 \norm{A} \\
&\abs{I} = \sqrt 2.
\end{align*}
For a $2 \times 2$ matrix function $A$, we write
$$\norm{A}_\iny = \sup_{i, j = 1,2}\norm{a_{ij}}_{L^\iny}.$$
The goal is to solve the equation $\bar \del P = A P$ in $\mathcal{R}$ and show that both $P$ and $P^{-1}$ have good control in terms of $M = \norm{A}_\iny$.

We first need some notation.
For some $\de > 0$, set 
$$V_i = \brac{i\de, i \de + \frac 3 2 \de}.$$
Then $V_{i-1} \cap V_i = \brac{i \de, i\de + \frac 1 2 \de}$ and $V_i \cap V_j \ne \emptyset$ if and only if $j = i \pm 1$.
Assuming that $\de$ is chosen so that $i_0 := \frac 1 \de - \frac 3 2 \in \N$, we have $\disp \brac{0, 1} = \bigcup_{i=1}^{i_0} V_i$.
Define 
$$U_i = V_i \times \brac{0, 1}.$$

The first proposition serves as the main tool in the proof the second proposition.

\begin{prop}
\label{matrixEstProp}
Let $\disp \set{H_i}_{i = 1}^{i_0}$ be a collection of $2 \times 2$ matrices such that each $H_i$ is defined on $U_{i-1} \cap U_{i}$, $\norm{H_i} \le 10$, $\norm{H_i^{-1}} \le 10$, and both $H_i$ and $H_i^{-1}$ are analytic on $U_{i-1} \cap U_{i}$.
Then there exists a collection of $2 \times 2$ matrices $\set{g_i}_{i = 0}^{i_0}$, where both $g_i$ and $g_i^{-1}$ are defined and analytic on $U_i$, with $H_i = g_{i-1} g_i^{-1}$ on $U_{i-1} \cap U_i$.
Moreover, there exists a constant $C > 0$ so that 
\begin{equation}
\label{gBds}
\abs{g_i}^2 + \abs{g_i^{-1}}^2 \le C e^{C/\de^2} \; \text{ in } \; U_i.
\end{equation}
\end{prop}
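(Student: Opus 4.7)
The plan is to prove this as a quantitative multiplicative Cousin problem for matrix-valued analytic functions on the strip covering $\set{U_i}$ of $[0,1]^2$. The three main ingredients are the scalar additive Cartan lemma, the Wiener-Masani matrix factorization theorem, and a Picard-style iteration that compensates for the non-commutativity of matrix multiplication.

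The first step is a reduction to the case in which each $H_i$ is close to the identity. For this we apply the Wiener-Masani theorem to the positive Hermitian matrix $H_i H_i^*$ (on a suitable boundary curve of $U_{i-1} \cap U_i$) to obtain an outer factorization $H_i H_i^* = F_i F_i^*$ with $F_i$ and $F_i^{-1}$ boundary values of bounded analytic matrix functions. Combined with the scalar Cartan lemma applied to $\det H_i$, this yields a preliminary analytic splitting of each $H_i$ as $a_{i-1} b_i^{-1}$, modulo a residual factor close to $I$. Absorbing $a_{i-1}, b_i$ into the eventual $g_i$'s reduces the problem to a perturbative one on the residual.

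In the perturbative regime $H_i = I + K_i$ with $\norm{K_i}$ small, solve the associated additive Cousin problem entrywise using the scalar Cartan lemma (equivalently, by a Cauchy-Pompeiu construction analogous to the one already used elsewhere in the paper): find matrix functions $f_i$ analytic on $U_i$ with $K_i = f_{i-1} - f_i$ on overlaps, together with $L^{\infty}$ bounds linear in $\norm{K_i}_\iny$ and in the number of strips $i_0$. Setting $g_i^{(0)} = I + f_i$ yields an approximate multiplicative factorization whose error is quadratic in $\norm{K_i}$. Iterating by replacing $K_i$ with this quadratic residual and re-applying the additive Cousin solver produces a geometrically convergent sequence $g_i^{(n)}$ whose limit gives the desired $g_i$; invertibility of $g_i$ follows because $g_i^{(0)}$ is a small perturbation of the identity and the subsequent corrections are controlled by a Neumann-type series.

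The principal obstacle is the non-commutativity of matrix multiplication, which rules out the scalar log/exp trick $H = e^{\log H}$ and the corresponding one-shot additive/multiplicative correspondence. Wiener-Masani bypasses this by producing an analytic factorization of the Hermitian positive part $H_i H_i^*$ without a matrix logarithm, while Picard iteration handles the close-to-identity remainder. The final bound $Ce^{C/\de^2}$ in \eqref{gBds} arises from multiplicative accumulation of the Wiener-Masani constants (which depend on $M = 10$) and of the iteration-and-additive-Cousin constants across the $i_0 \sim 1/\de$ strips, with an extra factor of $1/\de$ coming from the $L^\iny$ norm of the additive Cousin solution on a strip of width $\de$.
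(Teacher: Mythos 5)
Your overall strategy is genuinely different from the paper's, but as written it has two gaps that I do not see how to close.

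First, the reduction to the nearly-identity case does not work. Applying Wiener--Masani to $H_iH_i^*$ on (a conformal image of) the boundary of $U_{i-1}\cap U_i$ yields $H_i=F_iU_i$ where $U_i$ is analytic on the overlap and unitary on its boundary; such a factor is an inner matrix function (in fact, here, a constant unitary by the maximum principle), not a matrix close to $I$, and nothing in the hypotheses $\norm{H_i}\le 10$, $\norm{H_i^{-1}}\le 10$ forces closeness to the identity. More importantly, both $F_i$ and $U_i$ live only on the overlap $U_{i-1}\cap U_i$, exactly where $H_i$ already lives, so this factorization makes no progress on the actual content of the proposition, which is to split $H_i$ into factors that \emph{extend analytically} to $U_{i-1}$ and to $U_i$ respectively. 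The scalar Cartan lemma applied to $\det H_i$ likewise only controls a scalar invariant and gives no preliminary matrix splitting. Second, even granting a residual cocycle $I+K_i$ with $\norm{K_i}\le\eps_0$ for some absolute $\eps_0$, the Picard iteration does not converge: the additive Cousin solution on strips of width $\sim\de$ costs a factor $1/\de$ (as you note), so one step turns a residual of size $r$ into corrections of size $Cr/\de$ and a new residual of size $C r^2/\de^2$; convergence therefore requires $r\lesssim\de^2$, a smallness that depends on $\de$ and cannot be produced by any $\de$-independent preprocessing of the fixed matrices $H_i$.

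For contrast, the paper sidesteps both issues: it first invokes Cartan's lemma in Malgrange's form to get an \emph{unbounded} analytic splitting $H_i=\ga_{i-1}\ga_i^{-1}$ (existence only, no iteration), and only then recovers quantitative control by multiplying all the $\ga_i$ on the right by a single global analytic matrix $h$, constructed via Wiener--Masani applied on the boundary of the whole square $\mathcal R$ so that $g_i=\ga_i h$ is essentially unitary on $\del\mathcal{R}\cap U_i$. The interior bound \eqref{gBds} then comes from the maximum principle applied to a piecewise-defined continuous subharmonic majorant built from $\abs{g_i}^2$ with exponential weights $e^{c_ix+b_i}$ chosen to match across the $i_0\sim 1/\de$ overlaps; the $e^{C/\de^2}$ is the accumulated size of these weights. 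If you want to pursue an iterative scheme, you would need either a covering-independent bound for the multiplicative Cousin problem or a preliminary splitting that genuinely extends off the overlaps; as it stands, the proposal does not establish the proposition.
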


The proof of Proposition \ref{matrixEstProp} can be found in Section \ref{P1Proof}.
Here we use the result to prove the following proposition.

\begin{prop}
\label{matrixSolProp}
Let $A$ be a $2 \times 2$ matrix function defined on $\mathcal{R}$ with $M = \norm{A}_\iny$.
There exists an invertible solution to $\bar \del P = A P$ in $\mathcal{R}$ with the property that 
\begin{equation}
\label{PBds}
\norm{P} + \norm{P^{-1}} \le \exp\brac{C M^2 \pr{\log M}^2}.
\end{equation}
\end{prop}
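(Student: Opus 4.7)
The plan is to apply Proposition \ref{matrixEstProp} as a Cartan-type gluing tool, after first producing good \emph{local} solutions of $\bar\del P = AP$ on each strip $U_i$ in the cover from the statement of that proposition. Concretely, I would fix a scale $\de$ (to be chosen below), and on each $U_i$ solve the fixed-point equation
\begin{equation*}
P_i = I + T_{U_i}(A\,P_i)
\end{equation*}
by Picard iteration, where $T_{U_i}$ is the Cauchy-Pompeiu operator on $U_i$ (so that $\bar\del T_{U_i}f = f\chi_{U_i}$, and hence any fixed point satisfies $\bar\del P_i = AP_i$ on $U_i$).

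Convergence of the iteration is governed by $M\cdot\norm{T_{U_i}}_{L^\iny\to L^\iny}$. The key quantitative input is a sharp bound for the Cauchy-Pompeiu operator on a thin strip of width $\tfrac{3}{2}\de$ and length $1$: an elementary estimate of $\iint_{U_i}|\zeta-z|^{-1}\,dA(\zeta)$ (integrating first in the narrow direction, then in the long one) gives $\norm{T_{U_i}}_{L^\iny\to L^\iny}\le C\de\log(1/\de)$. Choosing $\de$ of order $c/(M\log M)$ for a small absolute constant $c$, with the minor adjustment needed so that $1/\de-3/2\in\N$, one obtains $M\,\norm{T_{U_i}}_{L^\iny\to L^\iny}\le 1/4$. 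A geometric Neumann series then produces $P_i$ with $\norm{P_i - I}_\iny\le 1/2$, whence $\norm{P_i}_\iny\le 2$ and, by another Neumann series, $\norm{P_i^{-1}}_\iny\le 2$ on $U_i$.

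For the gluing, set $H_i := P_{i-1}^{-1}P_i$ on the overlap $U_{i-1}\cap U_i$. Using $\bar\del(P_{i-1}^{-1}) = -P_{i-1}^{-1}(\bar\del P_{i-1})P_{i-1}^{-1} = -P_{i-1}^{-1}A$, one computes $\bar\del H_i = -P_{i-1}^{-1}AP_i + P_{i-1}^{-1}AP_i = 0$, so $H_i$ is analytic; similarly $H_i^{-1} = P_i^{-1}P_{i-1}$ is analytic. The uniform bounds on $P_i^{\pm 1}$ yield $\norm{H_i},\norm{H_i^{-1}}\le 4\le 10$, so Proposition \ref{matrixEstProp} applies and furnishes invertible analytic matrices $g_i$ on each $U_i$ with $H_i = g_{i-1}g_i^{-1}$ and $\abs{g_i}^2+\abs{g_i^{-1}}^2\le Ce^{C/\de^2}$. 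Defining $P := P_i g_i$ on each $U_i$, the factorization identity gives $P_i g_i = P_{i-1}H_i g_i = P_{i-1}g_{i-1}$ on overlaps, so $P$ is globally well-defined on $\mathcal{R}$; analyticity of $g_i$ yields $\bar\del P = (\bar\del P_i)g_i = AP$ on each $U_i$; and the operator-norm product bound combined with $1/\de^2 \sim M^2(\log M)^2$ gives $\norm{P}+\norm{P^{-1}}\le \exp\pr{CM^2(\log M)^2}$, as required.

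The main obstacle I anticipate is not the algebra of the gluing but the careful balancing of scales. A naive $L^\iny$ bound on $T_{U_i}$ reflecting that $U_i$ has diameter $1$ would be $O(1)$, which is useless, and even the bound $O(\de)$ one might guess from the width alone is insufficient: one genuinely needs the $\de\log(1/\de)$ refinement stemming from the integration of $|\zeta-z|^{-1}$ across the long direction of the strip. It is precisely this logarithmic factor, together with the $e^{C/\de^2}$ output of Proposition \ref{matrixEstProp}, that forces the exponent $M^2(\log M)^2$ in the final estimate; any looser local estimate would either prevent Picard from converging at all scales reaching $\de \sim 1/M$, or would inflate the final bound beyond what is stated.
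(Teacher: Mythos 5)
Your proposal is correct and follows essentially the same route as the paper's proof: local solutions on the thin strips $U_i$ via a Neumann/Picard series made convergent by the $C\de\log(1/\de)$ bound on the Cauchy--Pompeiu operator with $\de\sim c/(M\log M)$, followed by gluing the transition matrices $H_i=P_{i-1}^{-1}P_i$ using Proposition \ref{matrixEstProp} and setting $P=P_ig_i$. The only differences are cosmetic (the paper writes the fixed-point equation for $Q=P-I$ and obtains the constants $3$ and $9$ in place of your $2$ and $4$).
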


\begin{proof}
For a constant $C_1 > 0$ to be specified below, define $\de$ so that $\de \log\pr{1/\de} \le \frac 1 {3 C_1 M}$.
In particular, if $M \ge M_0$, then there exists $c_1$ depending on $M_0$ and $C_1$ so that if
\begin{equation}
\label{deDef}
\de := \frac{c_1}{M \log M},
\end{equation} 
then the bound above is satisfied and $i_0 = \frac 1 \de - \frac 3 2 \in \N$.

We first solve the equation $\bar \del P = A P$ in $R_\de := U_i$. \\
If $P$ a solution and $P = I + Q$, then 
$$\bar \del Q = \bar \del P = A P = A + A Q.$$
Let 
$$ T_{R_\de}\pr{F}\pr{z} = \frac 1 \pi \int_{R_\de} \frac{F\pr{\xi}}{z - \xi} d\om\pr{\xi}.$$
Note that $\bar \del\pr{T_{R_\de}\pr{F}} = F \chi_{R_\de}$.
Since we need to solve the equation $\bar \del Q - A Q = A$ in $R_\de$, we solve $\bar \del\brac{Q - T_{R_\de}\pr{AQ} - T_{R_\de}\pr{A}} = 0$.
Therefore, we seek solutions to 
\begin{equation}
\label{RdeEq}
Q - T_{R_\de}\pr{A Q} = T_{R_\de}\pr{A} \; \text{ in } \; R_\de. 
\end{equation}

\nid{\em Observation:} There exists a constant $C>0$ such that $\disp \sup_{z \in R_\de} \int_{R_\de} \frac 1 {\abs{z - \xi}} d\om\pr{\xi} \le C \de \log \pr{1/\de}$. \\
Recall that $R_\de = U_i = \brac{i \de, i \de + \frac 3 2 \de} \times \brac{0, 1}$.
Partition $\brac{0, 1}$ into equal intervals $I_k$ of length at most $\frac 3 2 \de$.
Then $\disp U_i = \bigcup_{k=1}^{\lceil 2/3\de\rceil} \brac{i\de, i \de + \frac 3 2 \de} \times I_k$.
Assume first that $z \in \brac{i \de, i \de + \frac 3 2 \de} \times I_1$.
For $k \ge 3$, if $\xi \in \brac{i \de, i \de + \frac 3 2 \de} \times I_k$, then $\abs{z - \xi} \simeq k \de$, and 
$$ \int_{\brac{i \de, i \de + \frac 3 2 \de} \times I_k} \frac 1 {\abs{z - \xi}} d\om\pr{\xi} \le \frac 1 {k \de} \de^2 = \frac \de k.$$
Moreover, 
$$\int_{\brac{i \de, i \de + \frac 3 2 \de} \times \pr{I_1 \cup I_2}} \frac 1 {\abs{z - \xi}} d\om\pr{\xi} \lesssim \int_{B\pr{z, 5\de}} \frac 1 {\abs{z - \xi}} d\om\pr{\xi} \simeq \int_0^{5\de} \frac r r dr \simeq \de.$$
Hence $\disp \int_{R_\de} \frac{1}{\abs{z - \xi}} d\om\pr{\xi} \lesssim \sum_{k=1}^{\lceil 2/3\de\rceil} \frac \de k \simeq \de \pr{\log\pr{1/\de}}$.
When $z \in \brac{i \de, i \de + \frac 3 2 \de} \times I_{k_0}$ for $k_0 > 1$, the result follows similarly and we have proved the observation. \\

\nid{\em Claim:} There exists a constant $C_1 > 0$ so that $\disp \norm{T_{R_\de}\pr{F}}_{L^\iny\pr{R_\de}} \le C_1 \de \log\pr{1/\de} \norm{F}_\iny$. \\
This claim follows directly from the observation above and the definition of the operator $T_{R_\de}$.

By the definition of $\de$ given in \eqref{deDef}, we have that $C_1 \de \log\pr{1/\de} M \le 1/3$.
Therefore, we can solve \eqref{RdeEq} via a Neumann series approach.
Moreover, the resulting solution $Q$ has $\norm{Q}_\iny \le \frac 3 2 C_1\de \log\pr{1/\de} M \le \frac 1 2$ and then $P = I + Q$ satisfies $\norm{P} < 3$ and $\norm{P^{-1}} < 3$.

Using the construction described above, for each $i = 0, \ldots, i_0$, define $P_i$ to be the matrix solution to 
$$\bar \del P_i= A P_i \; \text{ in } \; U_i$$ 
with $\norm{P_i} < 3$ and $\norm{P_i^{-1}} < 3$.
On $U_{i-1} \cap U_i$, define $H_i = P_{i-1}^{-1} P_i$.
Clearly, $\norm{H_i} \le 10$ and $\norm{H_i^{-1}} \le 10$.
As 
$$\bar \del H_i = - P_{i-1}^{-1} \bar \del P_{i-1} P_{i-1}^{-1} P_i + P_{i-1}^{-1} \bar \del P_i = - P_{i-1}^{-1} A P_{i-1} P_{i-1}^{-1} P_i + P_{i-1}^{-1} A P_i = 0,$$
then each $H_i$ is analytic on $U_{i-1} \cap U_i$.
A similar argument shows that each $H_i^{-1}$ is also analytic on $U_{i-1} \cap U_i$.
Therefore, Proposition \ref{matrixEstProp} is applicable.
That is, there exist functions $g_i$ defined and analytic on $U_i$ such that $H_i = g_{i-1} g_i^{-1}$ on $U_{i-1} \cap U_i$ and $\abs{g_i}^2 \le C e^{C/\de^2}$ on $U_i$.

Now we use the collections $\set{P_i}_{i=1}^{i_0}$ and $\set{g_i}_{i=1}^{i_0}$ to define a function $P$ on all of $R$.
On $U_i$, set $P = P_i g_i$.
Since each $g_i$ is analytic, then $\bar \del P = \bar \del P_i g_i = A P_i g_i = A P$ on each $U_i$, as required.
As $H_i = P_{i-1}^{-1} P_i = g_{i-1} g_i^{-1}$, then $P_i g_i = P_{i-1} g_{i-1}$ on $U_{i-1} \cap U_i$.
Moreover, 
$$\norm{P} + \norm{P^{-1}} = \norm{P_i g_i} + \norm{g_i^{-1} P_i^{-1}} \lesssim e^{C/\de^2}.$$
Referring to \eqref{deDef}, the estimate \eqref{PBds} follows.
\end{proof}

\begin{rem}
Although this construction was done on the unit rectangle (for convenience), since $d \in \brac{1, 3/2}$, the result still holds with a modified constant when $\mathcal{R}$ is replaced by $Q_d$.
\end{rem}

\begin{lem}
Let $\disp c_\iny = \sup_{s \in \brac{1, 3/2}}\set{\norm{T_{Q_{s}}}_{L^\iny\pr{Q_{s}} \to L^\iny\pr{Q_{s}}}}$.
Let $\disp C_3 =  \sup_{s \in \brac{1, 3/2}}\set{C_2\pr{s}}$, where $C_2\pr{s}$ is the constant given in Lemma \ref{grphiLem} on $Q_s$.
If we set $m = 2 c_\iny C_3$, then the matrix $G$ belongs to $L^\iny\pr{Q_d}$ and satisfies
$$\norm{G}_{L^\iny\pr{Q_d}} \le \frac{C\sqrt\la}{\log \la}.$$
\label{GBd}
\end{lem}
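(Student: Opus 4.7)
The plan is direct: estimate the exponential factors in $G$ by controlling the Cauchy--Pompeiu image of $\alpha + \widetilde\alpha$ in $L^\infty(Q_d)$, then combine with the definition of $\delta$.

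First I would exploit the special structure of the matrix: since $G$ has zero diagonal, its operator norm equals the larger of the absolute values of its two off-diagonal entries. Both entries have the form $\tfrac{\delta}{2}$ (or $\tfrac{\widetilde\delta}{2}$) multiplied by $e^{\pm T(\alpha+\widetilde\alpha)}$, and the pointwise bound $|\widetilde\delta|\le \delta$ reduces everything to controlling the common factor $\tfrac{\delta}{2}\,e^{\|T(\alpha+\widetilde\alpha)\|_{L^\infty(Q_d)}}$.

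Next I would bound the exponent. By construction $|\widetilde\alpha|=|\alpha|$ (where $w_2\ne 0$; elsewhere $\widetilde\alpha=0$), so $|\alpha+\widetilde\alpha|\le 2|\alpha| = 2|\bar\partial(\log\phi)|\le 2|\nabla(\log\phi)|$. Lemma \ref{grphiLem} gives $\|\nabla(\log\phi)\|_{L^\infty(Q_d)}\le C_2\lambda\le C_3\lambda$ uniformly for $d\in[1,3/2]$. Applying $T = T_{Q_b}$ and using the definition of $c_\infty$ as the supremum of the $L^\infty\to L^\infty$ norms on $Q_s$ for $s\in[1,3/2]$, I obtain
\begin{equation*}
\bigl\|T(\alpha+\widetilde\alpha)\bigr\|_{L^\infty(Q_d)} \le c_\infty\cdot 2 C_3\lambda = m\lambda,
\end{equation*}
by the stipulated choice $m=2c_\infty C_3$. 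Hence $|e^{\pm T(\alpha+\widetilde\alpha)}|\le e^{m\lambda}$ pointwise on $Q_d$.

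Finally, combining the two estimates and inserting the value \eqref{deBd} of $\delta$ gives
\begin{equation*}
\|G\|_{L^\infty(Q_d)} \le \tfrac{\delta}{2}\,e^{m\lambda} = \tfrac{1}{2}\cdot \frac{c_0\sqrt{\lambda}}{\log\lambda}\,e^{-m\lambda}\,e^{m\lambda} = \frac{C\sqrt\lambda}{\log\lambda},
\end{equation*}
which is the claimed bound. No step is really an obstacle here: the whole point is that $m$ was defined precisely so that the exponential blow-up from $e^{T(\alpha+\widetilde\alpha)}$ is exactly compensated by the $e^{-m\lambda}$ factor built into $\delta$; the main thing to verify carefully is that the constants $c_\infty$ and $C_3$ (hence $m$) are uniform for $d$ ranging in $[1,3/2]$, which is why the suprema in the definitions are taken over $s\in[1,3/2]$.
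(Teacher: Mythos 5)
Your proposal is correct and follows essentially the same route as the paper: bound $\al$ and $\widetilde\al$ pointwise via Lemma \ref{grphiLem}, apply the operator norm $c_\iny$ of the Cauchy--Pompeiu transform to get $\norm{T\pr{\al+\widetilde\al}}_{L^\iny\pr{Q_d}}\le 2c_\iny C_3\la = m\la$, and cancel the resulting exponential against the $e^{-m\la}$ factor built into \eqref{deBd}. The only cosmetic difference is that you bound $\al+\widetilde\al$ jointly while the paper bounds each term separately before summing.
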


\begin{proof}
Recall that $\disp G = \brac{\begin{array}{cc} 0 & -\frac{\widetilde \de}{2} e^{-T\pr{ \al+ \widetilde \al}} \\ \frac{\de}{2} e^{T\pr{\al + \widetilde \al}} & 0 \end{array}}$.
Since $\al = \bar{\del} \pr{\log \phi}$, $\overline \al = \del \pr{\log \phi}$, $\abs{\widetilde \al} = \abs{\bar \al}$, and $d \in \brac{1, 3/2}$, then it follows from Lemma \ref{grphiLem} that $\norm{\al}_{L^\iny\pr{Q_d}} \le C_3 \la$ and $\norm{\tilde \al}_{L^\iny\pr{Q_d}} \le C_3 \la$.
Therefore, $\norm{T\pr{\al + \widetilde \al}}_{L^\iny\pr{Q_d}} \le 2 c_\iny C_3 \la$ and then
\begin{align*}
\norm{G}_{L^\iny\pr{Q_d}} 
&\le \frac{\de}{2} \exp\pr{2 c_\iny C_3 \la} 
\le \frac{c_0 \sqrt\la}{2\log \la} \exp\pr{-m \la} \exp\pr{2 c_\iny C_3 \la} 
= \frac{C\sqrt\la}{\log \la},
\end{align*}
where we have used \eqref{deBd}.
\end{proof}

By combining the previous two results, we reach the following observation.

\begin{cor}
\label{PCor}
There exists an invertible matrix solution $P$ to 
\begin{equation}
\label{PGEqn}
\bar \del P = G P \quad \text{ in } Q_d
\end{equation} 
with the property that $$\norm{P}_{L^\iny\pr{Q_d}} + \norm{P^{-1}}_{L^\iny\pr{Q_d}}  \le \exp\pr{C \la}.$$
\end{cor}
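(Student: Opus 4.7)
The plan is to invoke Proposition \ref{matrixSolProp} with the matrix $A$ taken to be the matrix $G$ from \eqref{vecEqn}, using the $L^\iny$ bound on $G$ supplied by Lemma \ref{GBd}. Since the proposition is stated on the unit rectangle $\Cal{R}$ but (by the remark following its proof) extends with only a modified constant to $Q_d$ for $d \in \brac{1, 3/2}$, applicability is immediate.

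Concretely, set $M := \norm{G}_{L^\iny\pr{Q_d}}$. By Lemma \ref{GBd},
\begin{equation*}
M \le \frac{C \sqrt{\la}}{\log \la}.
\end{equation*}
Proposition \ref{matrixSolProp} (in its $Q_d$ form) then produces an invertible matrix solution $P$ to $\bar\del P = G P$ in $Q_d$ satisfying
\begin{equation*}
\norm{P}_{L^\iny\pr{Q_d}} + \norm{P^{-1}}_{L^\iny\pr{Q_d}} \le \exp\brac{C M^2 \pr{\log M}^2}.
\end{equation*}

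The only remaining step is to verify that the right-hand side is controlled by $\exp\pr{C \la}$. Squaring the bound on $M$ gives $M^2 \le \dfrac{C \la}{\pr{\log \la}^2}$, while $\log M \le \tfrac{1}{2} \log \la + O(1) \le C \log \la$ for $\la$ large, so that $\pr{\log M}^2 \le C \pr{\log \la}^2$. Multiplying these,
\begin{equation*}
M^2 \pr{\log M}^2 \le \frac{C \la}{\pr{\log \la}^2} \cdot C \pr{\log \la}^2 = C \la,
\end{equation*}
and the stated bound on $\norm{P} + \norm{P^{-1}}$ follows.

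There is essentially no obstacle here; the corollary is a clean corollary of Proposition \ref{matrixSolProp} combined with Lemma \ref{GBd}, and the sole point worth verifying carefully is the cancellation of the $\pr{\log \la}^2$ factors which is precisely why the bound $\de \le \frac{c_0 \sqrt \la}{\log \la} \exp(-m \la)$ in \eqref{deBd} is sharpened by the $\log \la$ denominator: without it, $M^2 \pr{\log M}^2$ would pick up an extra logarithmic factor and the final exponent would not be linear in $\la$.
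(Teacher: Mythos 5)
Your proposal is correct and matches the paper's argument exactly: the corollary is obtained by feeding the bound $M=\norm{G}_{L^\iny\pr{Q_d}} \le C\sqrt{\la}/\log\la$ from Lemma \ref{GBd} into Proposition \ref{matrixSolProp} (in its $Q_d$ form), and the $\pr{\log\la}^2$ factors cancel to give $M^2\pr{\log M}^2 \le C\la$. Your closing remark about why the $\log\la$ denominator in \eqref{deBd} is needed is accurate and a worthwhile observation.
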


\begin{lem}
If $\vec{w}$ is a solution to \eqref{vecEqn}, then $\vec{w} = P \vec{h}$, where $P$ is the invertible matrix given in Corollary \ref{PCor} and $\vec{h}$ is a 2-vector with holomorphic entries.
\end{lem}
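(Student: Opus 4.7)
The natural approach is to \emph{define} $\vec{h} := P^{-1}\vec{w}$ and verify that its entries are holomorphic, i.e.\ $\bar\del \vec{h} = \vec{0}$. Since $P$ is invertible on $Q_d$ by Corollary \ref{PCor}, this definition makes sense and $\vec{w} = P\vec{h}$ follows tautologically, so the only content is showing $\bar\del \vec{h} = \vec{0}$.

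First I would compute $\bar\del(P^{-1})$. Differentiating the identity $P P^{-1} = I$ with respect to $\bar z$ gives
\begin{equation*}
0 = (\bar\del P) P^{-1} + P(\bar\del P^{-1}) = G P P^{-1} + P(\bar\del P^{-1}) = G + P(\bar\del P^{-1}),
\end{equation*}
where I used $\bar\del P = GP$ from \eqref{PGEqn}. Multiplying by $P^{-1}$ on the left yields $\bar\del P^{-1} = -P^{-1} G$. Next, applying the product rule to $\vec{h} = P^{-1}\vec{w}$ and using \eqref{vecEqn},
\begin{equation*}
\bar\del \vec{h} = (\bar\del P^{-1})\vec{w} + P^{-1}(\bar\del \vec{w}) = -P^{-1} G \vec{w} + P^{-1} G \vec{w} = \vec{0}.
\end{equation*}
Thus each component of $\vec{h}$ is holomorphic on $Q_d$, and $\vec{w} = P\vec{h}$ with $P$ as in Corollary \ref{PCor}.

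The main thing to be careful about is regularity: the product rule for $\bar\del$ applied to matrix-valued functions requires enough smoothness to justify the computations (and in particular to differentiate $P^{-1}$). This is not really an obstacle, since $P$ was built via the Neumann-series argument in the proof of Proposition \ref{matrixSolProp} and its entries are at least in $W^{1,p}_{\loc}$ for some $p > 2$ (inheriting regularity from the Cauchy--Pompeiu operator $T_{R_\de}$ applied to bounded data), and the same holds for $P^{-1}$ since $\norm{P^{-1}}_{L^\iny}$ is bounded. All the $\bar\del$ operations above should therefore be interpreted in the distributional sense, and the computation goes through unchanged. This is the only subtlety; the algebraic content of the lemma is exactly the one-line calculation above, which is the standard similarity-principle extraction of the holomorphic factor once the fundamental matrix $P$ has been constructed.
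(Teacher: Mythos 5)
Your proposal is correct and follows essentially the same route as the paper: both define $\vec{h} = P^{-1}\vec{w}$ and verify holomorphy by the product rule together with $\bar\del P = GP$ (your identity $\bar\del P^{-1} = -P^{-1}G$ is exactly the paper's $-P^{-1}\bar\del P\, P^{-1}$ term). Your added remark on the regularity of $P$ and $P^{-1}$ is a sensible point but does not change the argument.
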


\begin{proof}
Since $P$ is invertible, it suffices to show that $P^{-1} \vec{w}$ is a holomorphic vector.
Using equations \eqref{vecEqn} and \eqref{PGEqn}, we compute:
\begin{align*}
\bar \del \pr{P^{-1} \vec{w}} 
&= - P^{-1} \bar \del P P^{-1} \vec{w} + P^{-1} \bar \del \vec{w} \\
&= - P^{-1} A P P^{-1} \vec{w} + P^{-1} A \vec{w}
= 0,
\end{align*}
as required.
\end{proof}

\subsection{Three-ball inequality}
We now come to the three-ball inequality.
Although we used cubes for the construction of the matrix solution $P$, we now work over balls and use that $P$ and $\vec w$ are solutions in $B_d \su Q_d$.
Using that $\vec w = P \vec{h}$ and $\norm{P}_{L^\iny\pr{B_d}} \le \exp\pr{C \la}$, we have
\begin{align*}
\norm{\widetilde w_1}_{L^\iny\pr{B_{1}}} 
&= \norm{p_{11} h_1 + p_{12} h_2 }_{L^\iny\pr{B_{1}}} \\
&\le \exp\pr{C \la} \brac{ \norm{h_1 }_{L^\iny\pr{B_{1}}} +  \norm{h_2}_{L^\iny\pr{B_{1}}}} \\
&\le \exp\pr{C \la} \brac{\norm{ h_1}_{L^\iny\pr{B_{r/2}}} ^\te \norm{h_1}_{L^\iny\pr{B_{d}}} ^{1-\te} + \norm{ h_2}_{L^\iny\pr{B_{r/2}}} ^\te \norm{h_2}_{L^\iny\pr{B_{d}}} ^{1-\te}},
\end{align*}
where we have applied the Hadamard 3-circle theorem to $h_1$ and $h_2$ with $0 < r < 1 < d$ and
\begin{equation}
\label{thEst}
- \frac 1 \te 
= \frac{\log\pr{\frac{r}{2d}}} {\log {d}}
= \frac{\log r  - \log \pr{2 +  \frac 1 {F\pr{\la}}}}{\log \pr{1 + \frac 1 {2 F\pr{\la}}}}
\ge CF\pr{\la} \log r.
\end{equation}
Now, using that $\vec h = P^{-1} \vec w$ and $\norm{P^{-1}}_{L^\iny\pr{B_d}} \le \exp\pr{C \la}$, we get
\begin{align*}
& \exp\pr{- C \la} \norm{\widetilde w_1}_{L^\iny\pr{B_{1}}} \\
&\le \norm{ p^{-1}_{11} \widetilde w_1 + p^{-1}_{12} \widetilde w_2}_{L^\iny\pr{B_{r/2}}} ^\te \norm{p^{-1}_{11} \widetilde w_1 + p^{-1}_{12} \widetilde w_2}_{L^\iny\pr{B_{d}}} ^{1-\te}  \\
&+ \norm{ p^{-1}_{21} \widetilde w_1 + p^{-1}_{22} \widetilde w_2}_{L^\iny\pr{B_{r/2}}} ^\te \norm{p^{-1}_{21} \widetilde w_1 + p^{-1}_{22} \widetilde w_2}_{L^\iny\pr{B_{d}}} ^{1-\te} \\
&\le \pr{\norm{ p^{-1}_{11} \widetilde w_1 }_{L^\iny\pr{B_{r/2}}} + \norm{ p^{-1}_{12} \widetilde w_2}_{L^\iny\pr{B_{r/2}}}} ^\te \pr{\norm{p^{-1}_{11} \widetilde w_1 }_{L^\iny\pr{B_{d}}} + \norm{p^{-1}_{12} \widetilde w_2}_{L^\iny\pr{B_{d}}} }^{1-\te}  \\
&+ \pr{\norm{ p^{-1}_{21} \widetilde w_1}_{L^\iny\pr{B_{r/2}}} + \norm{p^{-1}_{22} \widetilde w_2}_{L^\iny\pr{B_{r/2}}} }^\te \pr{\norm{p^{-1}_{21} \widetilde w_1}_{L^\iny\pr{B_{d}}}+\norm{p^{-1}_{22} \widetilde w_2}_{L^\iny\pr{B_{d}}} }^{1-\te} \\
&\le 2\exp\pr{C \la} \pr{\norm{\widetilde w_1 }_{L^\iny\pr{B_{r/2}}} + \norm{\widetilde w_2}_{L^\iny\pr{B_{r/2}}}} ^\te \pr{\norm{\widetilde w_1 }_{L^\iny\pr{B_{d}}} + \norm{\widetilde w_2}_{L^\iny\pr{B_{d}}} }^{1-\te} .
\end{align*}
Recall that $\widetilde w_1 = e^{-T\pr{2 \al}} \phi u$ and since $\disp v = \frac u \phi$, then
$$\widetilde w_2 = \de^{-1} e^{-T\pr{\al - \widetilde \al}} \brac{\phi \pr{- \del_x u + i \del_y u} + u \pr{ \del_x \phi - i \del_y \phi}}.$$
It follows from Lemma \ref{grphiLem} that $\norm{\al}_{L^\iny\pr{B_{d}}} \le C_2 \la$ and $\norm{\widetilde \al}_{L^\iny\pr{B_{d}}} \le C_2 \la$.
Therefore, $ \norm{T\pr{\al - \widetilde \al}}_{L^\iny\pr{B_{d}}} \le 2 c_\iny C_2 \la$ and $\norm{T\pr{2 \al}}_{L^\iny\pr{B_{d}}} \le 2 c_\iny C_2 \la$ as well.
Using \eqref{phiBounds}, we see that
\begin{align*}
&\norm{\widetilde w_1 }_{L^\iny\pr{B_{r/2}}} \le \exp\pr{C \la} \norm{u}_{L^\iny\pr{B_{r/2}}}
\end{align*}
and a similar estimate holds in $B_{d}$.
Using the estimate \eqref{gradphiEst}, we have
\begin{align*}
\norm{\widetilde w_2}_{L^\iny\pr{B_{r/2}}}
&\le \de^{-1} \exp\pr{C \la} \brac{\norm{\phi}_{L^\iny\pr{B_{r/2}}}\norm{\gr u }_{L^\iny\pr{B_{r/2}}}  
+ \norm{u}_{L^\iny\pr{B_{r/2}}}\norm{\gr \phi}_{L^\iny\pr{B_{r/2}}}} \\
&\le \de^{-1} \exp\pr{C \la} \brac{\norm{\phi}_{L^\iny\pr{B_{r/2}}} \pr{\frac {C\la^2} r \norm{u}_{L^\iny\pr{B_{r}}}}  + \norm{u}_{L^\iny\pr{B_{r/2}}} \pr{\frac {C\la^2} r \norm{\phi}_{L^\iny\pr{B_{r}}}}} \\
&\le \de^{-1} r^{-1} \exp\pr{C \la} \norm{u}_{L^\iny\pr{B_{r}}}
\end{align*}
and 
\begin{align*}
\norm{\widetilde w_2}_{L^\iny\pr{B_{d}}}
&\le \de^{-1} \pr{b - d}^{-1} \exp\pr{C \la} \norm{u}_{L^\iny\pr{B_{b}}}
\le \de^{-1}  \exp\pr{C \la} \norm{u}_{L^\iny\pr{B_{b}}},
\end{align*}
where we have used that $b - d = \frac 1 {2 F\pr{\la}} \gtrsim \frac 1 \la$.
Observe also that 
$$\norm{u}_{L^\iny\pr{B_{1}}} \le \norm{e^{T\pr{2\al}} \phi^{-1} \widetilde w_1}_{L^\iny\pr{B_{1}}} \le \exp\pr{C \la}\norm{\widetilde w_1}_{L^\iny\pr{B_{1}}}.$$
Combining our observations, we have
\begin{align*}
\norm{u}_{L^\iny\pr{B_{1}}}
&\le \exp\pr{C \la} \brac{\norm{u }_{L^\iny\pr{B_{r/2}}} + \de^{-1} r^{-1} \norm{u}_{L^\iny\pr{B_{r}}}} ^\te \brac{\norm{u}_{L^\iny\pr{B_{d}}} + \de^{-1} \norm{u}_{L^\iny\pr{B_{b}}} }^{1-\te} \\
&\le \de^{-1} \exp\pr{C \la} \pr{ r^{-1} \norm{u}_{L^\iny\pr{B_{r}}}}^\te \norm{u}_{L^\iny\pr{B_{b}}}^{1-\te} \\
&\le \de^{-1} \exp\brac{\pr{C+C_0}  \la} \pr{ r^{-1} \norm{u}_{L^\iny\pr{B_{r}}}}^\te,
\end{align*}
where we have applied \eqref{localBd}.
Using \eqref{deBd}, \eqref{localNorm}, and \eqref{thEst}, it follows that
\begin{align*}
\norm{u}_{L^\iny\pr{B_{r}}}
&\ge r \frac{c \sqrt \la}{\log \la}\exp\brac{- \frac{c_1 \la^p + \pr{C + C_0 + m} \la }{\te}} \\
&\ge r \exp\brac{C F\pr{\la} \la^q \log r}
\ge r^{ C \la^q F\pr{\la}},
\end{align*}
as required.

\section{The proof of Theorem \ref{LandisThm}}
\label{Landis}

We begin with a proposition that serves as the main tool in the iteration scheme.

\begin{prop}
Assume that $V : \R^2 \to \R$ satisfies \eqref{V+Cond} and \eqref{V-Cond}.
Let $u: \R^2 \to \R$ be a solution to \eqref{ePDE} for which \eqref{uBd} holds.
Let $\eps \in \pr{0, \frac{\eps_0}{1 + \eps_0}}$.
Suppose that for any $S \ge \tilde S\pr{C_0, c_0, \eps_0, \eps}$, there exists an $\al \in \pb{1, 2}$ so that
\begin{align}
\inf_{\abs{z_0} = S}\norm{u}_{L^\iny\pr{B_1\pr{z_0}}} \ge \exp\pr{- S^\al}.
\label{stepnEst}
\end{align}
Set $R = S + \pr{\frac S 2}^{\frac1{1- \eps}} -1$.
\begin{enumerate}
\item If $\al > \frac{1}{1 - \eps}$, then with $\be = \al - \frac{\al -1}{2}\eps$, it holds that
\begin{equation}
\inf_{\abs{z_1} = R} \norm{u}_{L^\iny\pr{B_1\pr{z_1}}} \ge \exp\pr{- R^\be}.
\label{stepn1Est}
\end{equation}
\item If $\al \in \pb{1, \frac{1}{1 - \eps} }$, then 
\begin{equation}
\inf_{\abs{z_1} = R} \norm{u}_{L^\iny\pr{B_1\pr{z_1}}} \ge \exp\pr{-C R^{1 + \eps} \log R},
\label{stepNEst}
\end{equation}
where $C$ depends on $C_0$.
\end{enumerate}
\label{itProp}
\end{prop}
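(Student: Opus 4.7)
The plan is to chain applications of Theorem~\ref{OofV} along the segment from a point $z_0$ with $|z_0|=S$ to $z_1$ with $|z_1|=R$. A single rescaled application cannot directly propagate the lower bound the required distance $R-S\sim S^{1/(1-\eps)}$, since the rescaled cube $Q_b$ would then necessarily extend past the origin, where the $V_-$ decay assumption fails. Instead, we fix $z_1$, let $z_0$ be the point on the segment from $0$ to $z_1$ with $|z_0|=S$, and construct centers $z^*_0=z_0, z^*_1, \ldots, z^*_N=z_1$ along this segment. At each step $i$, we will apply Theorem~\ref{OofV} translated to be centered at $z^*_i$, with $\lambda=CR$ (forced by the upper bound $\|u\|_{L^\infty(B_b(z^*_i))}\le e^{C_0(|z^*_i|+b)}\le e^{C_0(R+2)}$) and $F(\lambda)=\lambda^\eps$.

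The lower bound hypothesis is maintained inductively: set $\|u\|_{L^\infty(B_1(z^*_i))}\ge e^{-\sigma_i}$ with $\sigma_0=S^\alpha$, matching the hypothesis of Theorem~\ref{OofV} with $c_1=1$ and $p_i=\log\sigma_i/\log\lambda$. The $V_-$ condition $\|V_-\|_{L^\infty(Q_b(z^*_i))}\le\delta^2\sim e^{-2m\lambda}$ reduces, via $|z^*_i|\ge S$, to $(S-b\sqrt{2})^{1+\eps_0}\gtrsim\lambda\sim R$, which holds uniformly in $i$ because $S^{1+\eps_0}\gtrsim S^{1/(1-\eps)}\sim R$ precisely when $\eps<\eps_0/(1+\eps_0)$, the hypothesis of the proposition. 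We choose the step size $d_i=1-r_i$ with $|\log r_i|=1/(\tilde C F(\lambda))$, where $\tilde C$ is the constant in the conclusion of Theorem~\ref{OofV}; this guarantees $B_{r_i}(z^*_i)\subset B_1(z^*_{i+1})$, closing the chain, while using a total of $N\sim (R-S)\lambda^\eps\sim R^{1+\eps}$ steps.

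Theorem~\ref{OofV} then yields $\|u\|_{L^\infty(B_{r_i}(z^*_i))}\ge r_i^{\tilde C\lambda^{q_i}F(\lambda)}$ with $q_i=\max(1,p_i)$, and the calibration of $r_i$ produces the clean recurrence
\[
\sigma_{i+1}\le\tilde C\lambda^{q_i}F(\lambda)|\log r_i|=\lambda^{q_i}=\max(\lambda,\sigma_i),
\]
using $\lambda^{q_i}=\max(\lambda,\lambda^{p_i})=\max(\lambda,\sigma_i)$. Iterating gives $\sigma_N\le\max(\sigma_0,CR)=\max(S^\alpha,CR)$. In Case (1), $\alpha>1/(1-\eps)$ implies $S^\alpha\sim R^{\alpha(1-\eps)}$, and the identity $\beta-\alpha(1-\eps)=\eps(\alpha+1)/2>0$ gives $\sigma_N\le R^\beta$, establishing \eqref{stepn1Est}. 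In Case (2), $\alpha\le 1/(1-\eps)$ gives $S^\alpha\le R\le CR^{1+\eps}\log R$ for $R$ large, so $\sigma_N\le CR^{1+\eps}\log R$, establishing \eqref{stepNEst}.

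The main obstacle is preventing $\sigma_i$ from blowing up across the $\sim R^{1+\eps}$ chain links; any naive chaining would accumulate a multiplicative constant per step and destroy the bound. This is exactly what the calibration $|\log r_i|\cdot\tilde C F(\lambda)=1$ is designed to accomplish: the multiplicative constant $\tilde C F(\lambda)$ from Theorem~\ref{OofV} is exactly cancelled by the step-size factor $|\log r_i|$, so that $\sigma_i$ is transported rather than amplified at each link. Verifying this cancellation precisely, together with the uniform $V_-$ condition at every center $z^*_i$, constitutes the technical core of the argument.
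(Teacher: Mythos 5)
Your chaining scheme does not work, and the premise that motivates it is false. Taking the premise first: the paper's proof \emph{is} the single rescaled application you rule out. With $T=\pr{\frac S2}^{\frac1{1-\eps}}$, $b=1+\frac{S}{2T}$, and $\abs{z_1}=S+T-1=R$, the rescaled cube is $z_1+Q_{bT}$, whose half-side is $T+\frac S2$; after rotating coordinates so that $z_1$ lies on the positive real axis, every point of this cube has modulus at least $\abs{z_1}-bT=\frac S2-1>0$, so the cube never reaches the origin and $V_-\le\exp\pr{-c_0\pr{\frac S2-1}^{1+\eps_0}}$ on it. The hypothesis $\eps<\frac{\eps_0}{1+\eps_0}$ enters precisely to guarantee $\pr{\frac S2-1}^{1+\eps_0}\gtrsim T=\la$, so that this bound is $\le\de^2$ with $\de$ as in \eqref{deBd}. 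A single application of Theorem \ref{OofV} with $\la=T$, $F\pr{\la}=\la^{\eps}$, $p=\al\pr{1-\eps}$, and $r=T^{-1}$ then transports the lower bound from $B_1\pr{z_0}\subset B_T\pr{z_1}$ to $B_1\pr{z_1}$ in one step; the large scale factor $T$ is what makes one step suffice.

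The fatal gap in your argument is the claimed recurrence $\sigma_{i+1}\le\tilde C\la^{q_i}F\pr{\la}\abs{\log r_i}=\la^{q_i}$. First, Theorem \ref{OofV} asserts \eqref{localEst} only for $r$ sufficiently small, whereas your $r_i=\exp\pr{-1/(\tilde CF\pr{\la})}\to1$; the estimate \eqref{thEst}, which converts the Hadamard exponent into $CF\pr{\la}\log r$, requires $\abs{\log r}\gtrsim1$. Second, and decisively, if you rerun the three-ball inequality with $r$ close to $1$ you find $\frac1\te=\frac{\log\pr{2d/r}}{\log d}=\frac{\log\pr{2d}+\abs{\log r}}{\log d}\ge\frac{\log 2}{\log d}\simeq 2F\pr{\la}\log 2$: the exponent $1/\te$ does \emph{not} tend to $1$ as $r\to1^-$, but is bounded below by $cF\pr{\la}$ uniformly in $r$. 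The true per-step recurrence is therefore $\sigma_{i+1}\gtrsim F\pr{\la}\pr{\sigma_i+C\la}$ --- a multiplicative loss of order $F\pr{\la}=\la^{\eps}$ at every link --- so over your $N\sim R^{1+\eps}$ links the accumulated factor is $\pr{cF\pr{\la}}^{N}=\exp\pr{cR^{1+\eps}\log R}$ and you end with only $\norm{u}_{L^\iny\pr{B_1\pr{z_1}}}\ge\exp\pr{-e^{cR^{1+\eps}\log R}}$, which is useless. No choice of $F$ repairs this: with $F\pr{\la}=1$ the steps have unit length, $N\sim R$, and the per-step factor $1/\te$ is an absolute constant greater than $1$, again producing an $e^{cR}$ loss. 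This accumulation under unit-scale chaining is exactly the obstruction the Bourgain--Kenig rescaling is designed to circumvent, which is why the proof must be one large-scale application rather than a chain.
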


\begin{proof}
Define $T = \pr{\frac S 2}^{\frac 1{1 - \eps}}$ and set $b = 1 + \frac{S}{2T}$. 
Let $z_1 \in \R^2$ be such that $\abs{z_1} = S + T -1= R$.
Define 
\begin{align*}
&\tilde u\pr{z} = u\pr{z_1 + T z} \\
&\tilde V\pr{z} = T^2 V\pr{z_1 + T z}.
\end{align*}
Then $\LP \tilde u - \tilde V \tilde u = 0$ in $Q_b$.
Assumption \eqref{V+Cond}  implies that $\norm{\tilde V_+}_{L^\iny\pr{Q_b}} \le T^2$ while condition \eqref{V-Cond} gives that $\norm{\tilde V_-}_{L^\iny\pr{Q_b}} \le T^2 \exp\brac{- c_0 \pr{\frac S 2 - 1}^{1 + \eps_0}}$.
Moreover, $\norm{\tilde u}_{L^\iny\pr{B_b}} \le \exp\brac{C_0 \pr{\frac 3 2 S + 2T}} \le \exp\pr{5 C_0 T}$ and from \eqref{stepnEst} we see that with $z_0 := S \frac{z_1}{\abs{z_1}}$, $\norm{\tilde u}_{L^\iny\pr{B_1}} \ge \norm{u}_{L^\iny\pr{B_1\pr{z_0}}} \ge \exp\pr{- S^\al }$.

With $\la = T$, we see that $b = 1 + \la^{- \eps}$ and $\norm{\tilde V_+}_{L^\iny\pr{Q_b}} \le \la^2$.
Furthermore, if $S$ is sufficiently large in the sense that $\frac{\pr{S/2-1}^{1 + \eps_0}}{\pr{S/2}^{\frac {1}{1 -\eps}}} \ge \frac{3m}{c_0}$ (which is always possible because of the relationship between $\eps$ and $\eps_0$), then we have $\norm{\tilde V_-}_{L^\iny\pr{Q_b}} \le \de^2$ where $\de$ is given by \eqref{deBd} with $c$ depending only on $m$.
With $C_1 = 5 C_0$, we see that $\norm{\tilde u}_{L^\iny\pr{B_b}} \le \exp\pr{C_1 \la}$.
Finally, setting $c_1 = 4 \ge 2^\al$ and $p = \al\pr{1 - \eps}$, we have $\norm{\tilde u}_{L^\iny\pr{B_1}} \ge \exp\pr{- c_1 \la^p}$.
Now we may apply Theorem \ref{OofV} to conclude that for $r < 1$,
$$\norm{\tilde u}_{L^\iny\pr{B_r}} \ge r^{C \la^q},$$
where $q = \max\set{p, 1} + \eps$ and $C$ depends on $C_0$ and $m$.
Choosing $r = T^{-1} = \la^{-1}$, we see that
$$\norm{u}_{L^\iny\pr{B_1\pr{z_1}}} \ge \exp\pr{- C \la^q \log \la} = \exp\pr{- C T^q \log T}.$$ 

If $\al > \frac{1}{1 - \eps}$, then $q = p + \eps =\al - \pr{\al-1} \eps \in \pr{1 + \eps, \al}$.
If $S$ is sufficiently large in the sense that 
$$\frac{\pr{S/2}^{ \frac{\eps^2}{2\pr{1-\eps}^2}}}{\log\pr{S/ 2}} \ge \frac C {1 - \eps},$$
then $R^\be \ge C T^q \log T$ and it follows that
$$\norm{u}_{L^\iny\pr{B_1\pr{z_1}}} \ge \exp\pr{- R^\be}.$$
Since $z_1 \in \R^2$ with $\abs{z_1} = R$ was arbitrary, \eqref{stepn1Est} has been proved.

On the other hand, if $\al \in \pb{1, \frac{1}{1 - \eps}}$, then $p \le 1$ so that $q = 1 + \eps$.
Since $R \ge T$, it follows that
$$\norm{u}_{L^\iny\pr{B_1\pr{z_1}}} \ge \exp\pr{- C R^{1+\eps} \log R}.$$
Again, because $z_1 \in \R^2$ with $\abs{z_1} = R$ was arbitrary, \eqref{stepNEst} follows.
\end{proof}

Now we present the proof of the main theorem.

\begin{proof}[Proof of Theorem \ref{LandisThm}]
Let $\eps > 0$ be given and set $\eps_1 = \frac \eps 2$.
Since $\norm{V}_{L^\iny\pr{\R^2}} \le 1$, then Lemma 3.10 in \cite{BK05}, for example, implies that if $\abs{z_0} \ge 1$, then
\begin{align*}
\inf_{\abs{z_0} = S_0}\norm{u}_{L^\iny\pr{B_1\pr{z_0}}} \ge \exp\pr{- c S_0^{4/3} \log S_0},
\end{align*}
where $c$ depends on $C_0$.
We choose $\al_0 \in \pb{4/3, 2}$ so that $c \tilde S^{4/3} \log \tilde S \le \tilde S^{\al_0}$, where $\tilde S\pr{C_0, c_0, \eps_0, \eps_1}$ is the lower bound on $S$ given in Proposition \ref{itProp}.
For any $S_0 \ge \tilde S$, we see that
\begin{align*}
\inf_{\abs{z_0} = S_0}\norm{u}_{L^\iny\pr{B_1\pr{z_0}}} \ge \exp\pr{- S_0^{\al_0}}.
\end{align*}
Assume that $\al_0 > \frac{1}{1 - \eps_1}$.
For $n = 0, 1, 2, \ldots$, define $\al_{n+1} = \al_n - \frac{\al_n -1} 2 \eps_1$ and observe that as long as $\al_n >  \frac{1}{1 - \eps_1}$, $\frac {\al_{n+1}} {\al_n} < 1 - \frac{\eps_1^2}{2}$.
Therefore, there exists $N \in \N$ such that $\al_n >  \frac{1}{1 - \eps_1}$ for all $n = 0, 1, \ldots, N-1$, while $\al_{N} \le  \frac{1}{1 - \eps_1}$.
For each $n = 0, 1, 2, \ldots N$, we also define $S_{n+1} = S_n + \pr{\frac{S_n} 2}^{\frac 1 {1 -\eps_1}} - 1$.
Since $\al_n >  \frac{1}{1 - \eps_1}$ for each $n = 1, 2, \ldots, N$-1, then applications of the first case of Proposition \ref{itProp} with $\eps = \eps_1$, $\al = \al_{n}$, and $S = S_{n}$ give
\begin{align*}
\inf_{\abs{z_{n+1}} = S_{n+1}}\norm{u}_{L^\iny\pr{B_1\pr{z_{n+1}}}} \ge \exp\pr{- S_{n+1}^{\al_{n+1}}}.
\end{align*}
That is, Proposition \ref{itProp} holds with $\be = \al_{n+1}$ and $R = S_{n+1}$.
In particular,
\begin{align*}
\inf_{\abs{z_{N}} = S_{N}}\norm{u}_{L^\iny\pr{B_1\pr{z_{N}}}} \ge \exp\pr{- S_{N}^{\al_{N}}}.
\end{align*}
Since $\al_N \le  \frac{1}{1 - \eps_1}$, another application of Proposition \ref{itProp} (this time using the second case) shows that 
\begin{align*}
\inf_{\abs{z_{N+1}} 
= S_{N+1}}\norm{u}_{L^\iny\pr{B_1\pr{z_{N+1}}}} 
\ge \exp\pr{- C S_{N+1}^{1 +\eps_1}\log S_{N+1}}
\ge \exp\pr{- S_{N+1}^{1 + \eps}},
\end{align*}
completing the proof.
\end{proof}

\section{The proof of Proposition \ref{matrixEstProp}}
\label{P1Proof}

We now prove Proposition \ref{matrixEstProp} by following the argument in \cite{BR03}.
We start from Cartan's Lemma, as given by Malgrange:

\begin{thm}[Theorem 2 from Chapter 9 of \cite{Mal58}]
\label{CarLem}
Let $K$ be a rectangle in $\C$, and let $L, M$ be compact sets in $\C^\ell$, $\C^m$, respectively.
Let $H = K \cap \set{\Re z = 0}$.
Let $C\pr{z, \la, \mu}$ be a $C^\iny$ function in a neighborhood of $H \times L \times M$ that is holomorphic in $z$ and $\la$ with values in $GL\pr{m, \C}$.
Let $K_1 = K \cap \set{z \in \C : \Re z \ge 0}$ and $K_1 = K \cap \set{z \in \C : \Re z \le 0}$.
Then there exists function $C_1\pr{z, \la, \mu}$ and $C_2\pr{z, \la, \mu}$ in neighborhoods of $K_1 \times L \times M$ and $K_2 \times L \times M$, respectively, satisfying the same regularity conditions as $C$ and such that, in a neighborhood of $H \times L \times M$, $C = C_1 C_2^{-1}$.
\end{thm}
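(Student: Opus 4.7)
The plan is to prove this statement by following the scheme of \cite{BR03}: iteratively apply the Cartan-Malgrange factorization (Theorem~\ref{CarLem}) along the chain of overlapping rectangles $U_0, U_1, \ldots, U_{i_0}$, using the Wiener-Masani theorem to extract quantitative control at each factorization. Since the number of strips is $i_0 \sim 1/\de$ and each quantitative Cartan step contributes a multiplicative factor bounded by $\exp\pr{C/\de}$, the accumulated growth reaches $\exp\pr{C/\de^2}$, matching \eqref{gBds}.

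First I will establish a single-step quantitative factorization. For each $i$, the union $K_i := U_{i-1} \cup U_i = \brac{(i-1)\de, i\de + \tfrac{3}{2}\de} \times \brac{0, 1}$ is a rectangle in $\C$, and the overlap $U_{i-1} \cap U_i$ is a vertical substrip of width $\de/2$. Choose a vertical line $\ell_i$ strictly inside this overlap; since $H_i$ and $H_i^{-1}$ are holomorphic in a neighborhood of $\ell_i$, Theorem~\ref{CarLem} applied with $K = K_i$ and $H = \ell_i$ yields a factorization $H_i = A_i B_i^{-1}$ with $A_i^{\pm 1}$ holomorphic in a neighborhood of the left half of $K_i$ (arranged to cover $U_{i-1}$) and $B_i^{\pm 1}$ on the right half (covering $U_i$). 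The crucial quantitative step is to control $\abs{A_i^{\pm 1}}, \abs{B_i^{\pm 1}}$ in terms of $\norm{H_i}, \norm{H_i^{-1}} \le 10$ and the width $\de/2$ of the overlap. This is the matrix Birkhoff factorization problem on a strip, for which the Wiener-Masani theorem provides the required multiplicative (non-commutative) analogue of the additive Cauchy-integral splitting used in the classical scalar Cartan proof. Because the splitting occurs across a strip of width $\de$, the Cauchy-kernel contribution is a factor $\exp\pr{C/\de}$, and I obtain
\[
\abs{A_i}^2 + \abs{A_i^{-1}}^2 + \abs{B_i}^2 + \abs{B_i^{-1}}^2 \le C \exp\pr{C/\de}
\]
on the respective domains of holomorphy.

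Second, I will assemble these one-step factorizations into the global chain $\set{g_i}_{i=0}^{i_0}$. Setting $g_0 = I$ on $U_0$, I proceed recursively: at stage $i$ the required relation on $U_{i-1} \cap U_i$ forces $g_i = H_i^{-1} g_{i-1}$, and this analytic matrix (defined a priori only on the overlap) must be extended to all of $U_i$ while respecting the previously constructed $g_0, \ldots, g_{i-1}$. Applying Theorem~\ref{CarLem} a second time to $H_i^{-1} g_{i-1}|_{U_{i-1} \cap U_i}$ on $K_i$ produces a further splitting $H_i^{-1} g_{i-1} = \Phi_i \Psi_i^{-1}$ with $\Phi_i^{\pm 1}$ analytic on $U_{i-1}$ and $\Psi_i^{\pm 1}$ analytic on $U_i$. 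I define $g_i := \Psi_i^{-1}$ on $U_i$ and redefine $g_{i-1} \mapsto g_{i-1}\Phi_i$ on $U_{i-1}$; this preserves the relation $H_i = g_{i-1} g_i^{-1}$ and keeps $g_i$ analytic. The redefinition of $g_{i-1}$ perturbs the previous relation $H_{i-1} = g_{i-2} g_{i-1}^{-1}$, so a corresponding correction must propagate back along the chain. By exploiting the gauge freedom $(\Phi_i, \Psi_i) \mapsto (\Phi_i M, \Psi_i M)$ with $M$ a globally analytic normalization (for instance, demanding $\Phi_i = I$ at a fixed reference point in $U_{i-1}$), the backward cascade can be kept to $O(1)$ additional Cartan steps per stage, yielding $O(i_0) = O(1/\de)$ total factorizations whose combined multiplicative contribution is $\exp\pr{C/\de^2}$.

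The main obstacle is the bookkeeping of these cascading corrections: a naive implementation in which each stage provokes modifications of all earlier $g_k$'s would require $O(i_0^2)$ Cartan steps and produce the weaker bound $\exp\pr{C/\de^3}$. The choice of normalization at each Cartan step must be made so that successive corrections telescope along the chain without accumulating extra factors of $i_0$ in the exponent. This normalization, together with the Wiener-Masani quantitative factorization (which replaces the scalar logarithm of the classical Cartan argument and handles matrix non-commutativity), is the key technical input that produces the sharp $\exp\pr{C/\de^2}$ growth asserted in \eqref{gBds}.
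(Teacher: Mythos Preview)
There is a fundamental mismatch: the statement labeled Theorem~\ref{CarLem} is Cartan's Lemma, quoted verbatim from Malgrange's book as a black-box input; the paper does not prove it. Your proposal is not a proof of Theorem~\ref{CarLem} at all --- it is an attempt to prove Proposition~\ref{matrixEstProp}, which \emph{uses} Theorem~\ref{CarLem} as a tool. You have confused the cited lemma with the proposition the paper builds on top of it.

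Even read as a proof of Proposition~\ref{matrixEstProp}, your approach diverges substantially from the paper's and the key steps are not justified. The paper's strategy is: (i) apply the non-quantitative Cartan lemma once to obtain analytic factors $\ga_i$ on each $U_i$ with $H_i = \ga_{i-1}\ga_i^{-1}$ and no bounds whatsoever; (ii) apply the Wiener--Masani theorem \emph{once, globally}, prescribing $(h^{-1})^*h^{-1}$ on $\partial R$ from the $\ga_i$, to produce a single analytic correction $h$ on all of $R$, and set $g_i = \ga_i h$; (iii) derive the bound $\exp(C/\de^2)$ by constructing a global subharmonic function $v$ pieced together from the $|g_i|^2$ with carefully chosen exponential weights $e^{c_i^+ x + b_i^+}$ (Lemma~\ref{vDefLem}), and then applying the maximum principle on $R$ (Lemma~\ref{vBd}). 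The $1/\de^2$ in the exponent arises because $c_i^+ = iA/\de$ with $i\le i_0\sim 1/\de$, not from iterating $O(1/\de)$ factors of size $\exp(C/\de)$.

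Your scheme instead posits a quantitative single-step Cartan factorization costing $\exp(C/\de)$ per step, then iterates with backward-propagating corrections. The claimed single-step bound is not established --- Wiener--Masani as stated in the paper gives existence, not a strip-width-dependent estimate --- and the cascading-correction step, which you yourself flag as ``the main obstacle,'' is left as a hope that corrections ``telescope'' under an unspecified normalization. Without the paper's subharmonic-function-plus-maximum-principle mechanism (or a genuine substitute), nothing in your outline actually controls the accumulated growth, so this is a real gap rather than a different route to the same result.
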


Repeated applications of this theorem (with $L, M = \emptyset$) produce a collection of analytic functions $\set{\ga_i}_{i=1}^{i_0}$, where each $\ga_i$ is defined on $U_i$ and satisfies $H_i = \ga_{i-1} \ga_i^{-1}$ on $U_{i-1} \cap U_i$.
As given, these are no explicit bounds for these functions $\ga_i$, so our goal is to produce such estimates.
To do this, we find an invertible analytic function $h$ defined on $R$ and then set 
\begin{equation}
\label{giDef}
g_i = \ga_i h \; \text{ on } \; U_i.
\end{equation}
Then $g_{i-1} g_i^{-1} = \ga_{i-1} h h^{-1} \ga_i^{-1} = \ga_{i-1} \ga_i^{-1} = H_i$ on $U_{i-1} \cap U_i$, as desired.

To find $h$ and establish that both $h$ and $g_i$ have good bounds, we rely on the Wiener-Masani Theorem.
The following statement is from \cite{BR03}, see also \cite{WM57}.
We use this theorem over a rectangle instead of a ball.

\begin{thm}[\cite{BR03} Theorem 2.1]
\label{WMThm}
Let $A_0$ be a positive definite $N \times N$ matrix of smooth functions defined on the circle.
Then there exists a $N \times N$ matrix $h$ of holomorphic functions in the disk, extending smoothly to the boundary such that
$$A_0 = h^* h$$
on the circle, and such that $g = h^{-1}$ is also holomorphic in the disk and extends smoothly to the boundary.
The matrix $h$ is uniquely determined up to multiplication from the left by a constant unitary matrix.
 \end{thm}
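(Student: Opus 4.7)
The plan is to prove the matrix spectral factorization by reducing to the unit-determinant case via scalar Szegő factorization, then performing the matrix factorization through a Riemann--Hilbert / matrix Toeplitz operator argument, and finally verifying boundary smoothness, invertibility, and uniqueness by simple maximum principle arguments.

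First, I would peel off the determinant. Since $A_0$ is positive definite and smooth on $\partial D$, the function $\log \det A_0$ is real-valued and smooth on the circle, so it is the real part of a function $f$ holomorphic in $D$ and smooth up to the boundary (obtained by Poisson extension followed by harmonic conjugation; $C^\infty$-regularity survives because the harmonic conjugate preserves smoothness on the circle up to a constant). Setting $\phi = \exp(f/2)$ yields a nonvanishing scalar holomorphic function on $D$ with $|\phi|^2 = \det A_0$ on $\partial D$. Replacing $A_0$ by $|\phi|^{-2} A_0$ reduces the problem to the case $\det A_0 \equiv 1$, and the final $h$ in the general case is recovered by multiplying the unit-determinant factor by $\phi$.

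Second, and the heart of the argument, I would construct $h$ in the $\det A_0 \equiv 1$ case as a matrix-valued Wiener--Hopf / outer factorization. Consider the matrix Toeplitz operator $T_{A_0}(F) = P_+(A_0 F)$ acting on the matrix Hardy space $H^2(D;\mathbb{C}^{N \times N})$, where $P_+$ is the Riesz projection onto holomorphic functions. Positive-definiteness of the smooth symbol $A_0$, combined with the vanishing of the winding number $\int_{\partial D} d \arg(\det A_0) = 0$ (automatic now that $\det A_0 \equiv 1$), makes $T_{A_0}$ invertible with no index obstruction. Solving $T_{A_0} F = I$ produces a holomorphic $F$ on $D$ smooth up to the boundary, from which $h$ is extracted via a pointwise Cholesky factorization on $\partial D$ followed by holomorphic extension inward. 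The smoothness of $h$ up to $\partial D$ is inherited from the boundedness of $P_+$ on $C^\infty(\partial D)$ together with elliptic regularity for the associated $\bar\partial$-problem.

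Third, the invertibility of $h$ follows from the identity $|\det h|^2 = \det A_0 = 1$ on $\partial D$: since $\det h$ is holomorphic in $D$ with unimodular boundary values, applying the maximum principle to both $\det h$ and $(\det h)^{-1}$ shows $|\det h|$ is bounded above and below in $\bar D$, so $h^{-1}$ exists as a holomorphic matrix smooth up to the boundary. Uniqueness up to a constant unitary left factor is then immediate: if $A_0 = h_1^* h_1 = h_2^* h_2$, then $U := h_2 h_1^{-1}$ is holomorphic in $D$ with $U^* U = I$ on $\partial D$, and the maximum principle (applied to each matrix entry of $U^*U - I$ via its harmonic extension) forces $U$ to be a constant unitary. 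The main obstacle throughout is the second step: the scalar Szegő argument cannot be ported directly to the matrix setting because $\log A_0$ and subsequent exponentiation do not behave well for noncommuting matrix functions, so one must genuinely invoke the matrix Toeplitz operator machinery (or, equivalently, an inductive Schur-complement reduction on the matrix size $N$) to produce the holomorphic factor.
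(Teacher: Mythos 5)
The paper does not prove this statement: it is quoted verbatim as Theorem 2.1 of \cite{BR03} (going back to Wiener--Masani \cite{WM57}) and used as a black box, so there is no in-paper argument to compare yours against. Judged on its own, your proposal contains real gaps at the two places where the theorem is actually hard. The most serious one is the extraction of $h$ in your second step: after solving $T_{A_0}F=I$ you propose to obtain $h$ by ``a pointwise Cholesky factorization on $\partial D$ followed by holomorphic extension inward.'' A pointwise algebraic factorization $A_0(e^{i\theta})=L(\theta)L(\theta)^*$ produces a matrix function with no reason whatsoever to be the boundary value of anything holomorphic -- producing a factor that \emph{does} extend holomorphically is precisely the content of the theorem, so this step begs the question. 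The standard way to finish from $\Phi:=T_{A_0}^{-1}I$ is to observe that $A_0\Phi=I+\overline{\psi}$ with $\psi\in H^2_0$, hence $\Phi^*A_0\Phi$ is simultaneously analytic and anti-analytic on the circle and therefore equals a constant positive matrix $C$, and then to set $h=C^{1/2}\Phi^{-1}$; none of this appears in your write-up. Relatedly, your third step is circular: you apply the maximum principle to $(\det h)^{-1}$ to conclude that $\det h$ does not vanish in $D$, but $(\det h)^{-1}$ is holomorphic only if $\det h$ is already zero-free. A holomorphic function with unimodular smooth boundary values can perfectly well be a finite Blaschke product; ruling out interior zeros of $\det h$ (equivalently, the outerness of $\Phi$) requires a separate argument tied to the construction, and it is exactly this point that makes $h^{-1}$ holomorphic as the theorem asserts.

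Two smaller issues. Your determinant normalization is off: with $|\phi|^2=\det A_0$ one has $\det\pr{|\phi|^{-2}A_0}=(\det A_0)^{1-N}$, so you must take an $N$-th root ($|\phi|^{2N}=\det A_0$) to reach $\det\equiv 1$; harmless, but as written the reduction fails for $N\ge 2$. In the uniqueness step, the entries of $U^*U-I$ are sums of products $\overline{u_{ki}}\,u_{kj}$ and are not harmonic, so ``the maximum principle applied to each entry via its harmonic extension'' does not literally apply. The conclusion is of course correct and standard: for instance, the diagonal entries of $U^*U$ are subharmonic, giving $\sum_k|u_{ki}(z)|^2\le 1$ in $D$, and combining this with the same bound for $U^{-1}$ and Hadamard's inequality for $|\det U|$ forces $U(z)$ to be unitary throughout $D$, whence $\Delta\sum_{i,j}|u_{ij}|^2=4\sum_{i,j}|u_{ij}'|^2=0$ and $U$ is constant; alternatively one can Schwarz-reflect $U^{-1}=U^*$ across the circle and invoke Liouville. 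As it stands, though, the proposal establishes neither the existence of a holomorphic factor nor its invertibility, which are the substance of the theorem.
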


Thus, we need to prescribe the values of $\pr{h^{-1}}^* h^{-1}$ on $\del R$.
Define the sets
\begin{equation}
\label{WiDef}
W_i = \left\{\begin{array}{ll} 
U_i \setminus \pr{U_i \cap U_{i+1}} & \text{ if } i = 0 \\
U_i \setminus \brac{\pr{U_{i-1} \cap U_i} \cup \pr{U_i \cap U_{i+1}}} & \text{ if } i = 1, \ldots, i_0 - 1 \\
U_i \setminus \pr{U_{i-1} \cap U_i} & \text{ if } i = i_0 \end{array}\right.
\end{equation}
First define $h$ on each $\del R \cap W_i$ so that $\pr{h^{-1}}^* h^{-1} = \ga_i^* \ga_i$ there.
This implies that $g_i^* g_i = I$ on this part of the boundary.
Then on each $\del R \cap \pr{U_{i-1} \cap U_i}$, the function $\pr{h^{-1}}^* h^{-1}$is defined as a convex combination of $\ga_{i-1}^* \ga_{i-1}$ and $\ga_i^* \ga_i$.
Once this process has been carried out, we have that $\pr{h^{-1}}^* h^{-1}$ is defined unambiguously on $\del R$ and an application of the Weiner-Masani Theorem implies that there exists an analytic function function $h^{-1}$ defined in $R$.
In conclusion, the required analytic function $h$ exists.

Once we establish that \eqref{gBds} holds, the proof of Proposition \ref{matrixEstProp} is complete.
Now we work to establish bounds for $\ga_i$ and $g_i$ through a series of technical results.

\begin{lem}
\label{gaComp}
On $U_{i-1} \cap U_i$,
$$\frac 1{10} \le \frac{\abs{\ga_{i-1}}}{\abs{\ga_i}} \le 10 \; \text{ and } \; \frac 1{10} \le \frac{\abs{\ga_{i-1}^{-1}}}{\abs{\ga_i^{-1}}} \le 10.$$
\end{lem}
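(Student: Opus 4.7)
The plan is to exploit the Cartan factorization $H_i = \ga_{i-1}\ga_i^{-1}$ pointwise on $U_{i-1} \cap U_i$, together with the matrix norm comparisons recorded just before Proposition \ref{matrixSolProp}. The key observation is that this factorization can be rearranged in two ways to express $\ga_{i-1}$ (respectively $\ga_i^{-1}$) as a product involving $H_i^{\pm 1}$, so the hypotheses $\norm{H_i} \le 10$ and $\norm{H_i^{-1}} \le 10$ transfer directly to two-sided comparisons of the Frobenius norms $\abs{\ga_i}$ and $\abs{\ga_i^{-1}}$.

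First I would write $\ga_{i-1} = H_i \ga_i$ and, multiplying by $H_i^{-1}$, also $\ga_i = H_i^{-1} \ga_{i-1}$ on $U_{i-1}\cap U_i$. To bound $\abs{\ga_{i-1}}$ by a multiple of $\abs{\ga_i}$, I would use the inequality $\abs{AB} \le \norm{A}\abs{B}$; this is obtained either by summing $\abs{A b_j}^2 \le \norm{A}^2\abs{b_j}^2$ over the columns of $B$, or by transposing the version $\abs{AB} \le \abs{A}\norm{B}$ that the paper lists. Applied to the two rearrangements with $\norm{H_i}, \norm{H_i^{-1}} \le 10$, this gives $\abs{\ga_{i-1}} \le 10\abs{\ga_i}$ and $\abs{\ga_i} \le 10\abs{\ga_{i-1}}$, which is the first chain of inequalities in the statement.

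For the second chain, I would invert the Cartan identity in two ways: from $H_i = \ga_{i-1}\ga_i^{-1}$ one reads off $H_i^{-1} = \ga_i \ga_{i-1}^{-1}$, hence $\ga_{i-1}^{-1} = \ga_i^{-1} H_i^{-1}$, and multiplying on the right by $H_i$ also $\ga_i^{-1} = \ga_{i-1}^{-1} H_i$. Now applying the listed inequality $\abs{AB} \le \abs{A}\norm{B}$ (with the $H_i^{\pm 1}$ on the right), together with the operator-norm bounds on $H_i$ and $H_i^{-1}$, immediately yields $\abs{\ga_{i-1}^{-1}} \le 10 \abs{\ga_i^{-1}}$ and $\abs{\ga_i^{-1}} \le 10 \abs{\ga_{i-1}^{-1}}$.

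I do not expect any obstacle here: the whole argument is a handful of lines of linear algebra that propagate the hypotheses on $H_i$ through the Cartan factorization and its inverse. Analyticity of the $\ga_i$ plays no role for this pointwise comparison; it will matter in the subsequent steps where these bounds are combined with the Wiener--Masani construction of $h$ to produce the estimates on $g_i = \ga_i h$ claimed in \eqref{gBds}.
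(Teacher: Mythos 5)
Your proposal is correct and follows essentially the same route as the paper: rearrange the Cartan factorization $H_i=\ga_{i-1}\ga_i^{-1}$ (and its inverse) to write each $\ga$ as $H_i^{\pm1}$ times the other, then apply the mixed Frobenius/operator-norm inequality with $\norm{H_i},\norm{H_i^{-1}}\le 10$. The only cosmetic difference is which side the operator norm falls on; both variants of the inequality hold, as you note, so nothing further is needed.
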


\begin{proof}
Since $H_i = \ga_{i-1}\ga_i^{-1}$ on $U_{i-1} \cap U_i$ and we can write $\ga_{i-1} = \ga_{i-1} \ga_i^{-1}\ga_i = H_i \ga_i$, then 
$$\abs{\ga_{i-1}} \le \norm{H_i} \abs{\ga_i} \le 10 \abs{\ga_i},$$
from the assumed bound on $H_i$.
Similarly,
$$\abs{\ga_{i}} \le \norm{\ga_{i} \ga_{i-1}^{-1}} \abs{\ga_{i-1}} = \norm{H_i^{-1}} \abs{\ga_{i-1}} \le 10 \abs{\ga_{i-1}}.$$
Combining these two bounds leads to the first stated estimate.
The same argument for the inverses gives the second estimate.
\end{proof}

\begin{lem}
\label{gComp}
On $U_{i-1} \cap U_i$,
$$\frac 1{10} \le \frac{\abs{g_{i-1}}}{\abs{g_i}} \le 10 \; \text{ and } \; \frac 1{10} \le \frac{\abs{g_{i-1}^{-1}}}{\abs{g_i^{-1}}} \le 10.$$
\end{lem}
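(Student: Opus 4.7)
The plan is to observe that the pair $\{g_i\}$ satisfies exactly the same cocycle identity as the pair $\{\ga_i\}$, and then to copy the proof of Lemma \ref{gaComp} essentially verbatim. Indeed, by the definition \eqref{giDef}, $g_i = \ga_i h$ with $h$ a single invertible analytic function defined globally on $R$. Hence on the overlap $U_{i-1} \cap U_i$,
$$g_{i-1} g_i^{-1} = \ga_{i-1} h\, h^{-1} \ga_i^{-1} = \ga_{i-1} \ga_i^{-1} = H_i,$$
so the matrix $h$ cancels and the cocycle $H_i$ controls the ratios of the $g_i$ in precisely the same way it controlled the ratios of the $\ga_i$.

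With this identity in hand, I would simply mimic the two-line argument of Lemma \ref{gaComp}. From $g_{i-1} = H_i g_i$ and the hypothesis $\norm{H_i} \le 10$ we get $\abs{g_{i-1}} \le \norm{H_i}\abs{g_i} \le 10 \abs{g_i}$; symmetrically, from $g_i = H_i^{-1} g_{i-1}$ and $\norm{H_i^{-1}} \le 10$ we get $\abs{g_i} \le 10 \abs{g_{i-1}}$. Combining these gives the first stated bound. For the second, I would start from the inverse relation $g_{i-1}^{-1} = g_i^{-1} H_i^{-1}$ on $U_{i-1} \cap U_i$ (which is immediate from $g_{i-1} g_i^{-1} = H_i$) and apply the same operator-norm inequalities to obtain $\abs{g_{i-1}^{-1}} \le 10 \abs{g_i^{-1}}$ and the reverse bound from $g_i^{-1} = g_{i-1}^{-1} H_i$.

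There is no real obstacle here: the content of the lemma is already encoded in the construction of $g_i$ from $\ga_i$ by multiplication by a common factor $h$, which makes the cocycle identity pass from $\{\ga_i\}$ to $\{g_i\}$ unchanged. The only thing to be mildly careful about is the direction of composition — keeping $H_i$ on the correct side so that the operator norm bound $\abs{AB} \le \abs{A}\norm{B}$ recorded earlier in the text applies cleanly — but once that bookkeeping is done, the lemma is immediate. The real work for establishing the quantitative bound \eqref{gBds} on $g_i$ itself still lies ahead and will use $h$ in an essential way through the Wiener–Masani theorem; the present lemma only records that, \emph{whatever} size $g_i$ turns out to have, neighboring pieces are comparable up to the universal factor $10$.
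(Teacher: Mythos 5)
Your proof is correct and takes essentially the same route as the paper's: the trace manipulation in the paper's argument is just an expanded way of writing $g_{i-1} = H_i g_i$ (and $g_i = H_i^{-1} g_{i-1}$) on the overlap and then applying the mixed Frobenius/operator-norm inequality, which is exactly what you do. Your explicit handling of the inverses via $g_{i-1}^{-1} = g_i^{-1} H_i^{-1}$ supplies the detail the paper leaves to the reader with ``the same bounds hold for the inverses.''
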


\begin{proof}
We have
\begin{align*}
\abs{g_{i-1}}^2
&= \tr\pr{g_{i-1}^* g_{i-1}}
= \tr\pr{g_{i-1} g_{i-1}^*}
= \tr\pr{\ga_{i-1} h h^* \ga_{i-1}^*} \\
&= \tr\pr{\ga_{i-1} \ga_i^{-1}\pr{\ga_i h h^* \ga_i^*} \ga_i^{-1*} \ga_{i-1}^*}
= \abs{\ga_{i-1} \ga_i^{-1} \ga_i h}^2
\le \norm{\ga_{i-1} \ga_i^{-1}}^2 \abs{ \ga_i h}^2 \\
&= \norm{H_i}^2 \abs{ g_i}^2
\le 10^2 \abs{ g_i}^2.
\end{align*}
Similarly,
\begin{align*}
\abs{g_{i}}^2
&
= \tr\pr{\ga_{i} h h^* \ga_{i}^*} 
= \tr\pr{\ga_{i} \ga_{i-1}^{-1}\pr{\ga_{i-1} h h^* \ga_{i-1}^*} \ga_{i-1}^{-1*} \ga_{i}^*}
\le \norm{\ga_{i} \ga_{i-1}^{-1}}^2 \abs{ \ga_{i-1} h}^2 \\
&= \norm{H_i^{-1}}^2 \abs{ g_{i-1}}^2
\le 10^2 \abs{ g_{i-1}}^2.
\end{align*}
Combining these two observations leads to the first bound on $U_{i-1} \cap U_i$, and the same bounds hold for the inverses.
\end{proof}

\begin{lem}
\label{gEst}
On $\del R \cap U_i$,
$$\frac 2 {10^2} \le \abs{g_i}^2 \le 2 \cdot 10^2 \; \text{ and } \; \frac 2 {10^2} \le \abs{g_i^{-1}}^2 \le 2 \cdot 10^2.$$
\end{lem}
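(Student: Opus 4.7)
The approach is to split $\partial R \cap U_i$ according to how the prescription $A_0 := (h^{-1})^* h^{-1}$ was made in the Wiener--Masani construction of $h$, and to push each prescription through the identity $g_i = \gamma_i h$. On the easy piece $\partial R \cap W_i$, one has $A_0 = \gamma_i^* \gamma_i$ by construction, so
\[
g_i^* g_i \;=\; h^* \gamma_i^* \gamma_i h \;=\; h^* A_0\, h \;=\; h^*(h^{-1})^* h^{-1} h \;=\; I;
\]
hence $g_i$ is unitary there and $|g_i|^2 = |g_i^{-1}|^2 = \operatorname{tr}(I) = 2$, safely inside the claimed range.

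The substantive case is a boundary piece of the form $\partial R \cap (U_i \cap U_j)$ with $j = i \pm 1$, where the construction gives $A_0 = t\, \gamma_j^* \gamma_j + (1-t)\, \gamma_i^* \gamma_i$ for some $t \in [0,1]$. The plan is to compare $A_0$ with $\gamma_i^* \gamma_i$ in the positive-definite order through the conjugation
\[
B \;:=\; (\gamma_i^{-1})^* A_0\, \gamma_i^{-1} \;=\; t\, H^* H + (1-t)\, I,
\]
where $H := \gamma_j \gamma_i^{-1}$ is either $H_i$ or $H_{i+1}^{-1}$ and so satisfies $\|H\|, \|H^{-1}\| \le 10$ by hypothesis. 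The singular values of $H$ then lie in $[1/10, 10]$, the eigenvalues of $H^* H$ lie in $[1/100, 100]$, and therefore $\tfrac{1}{100}\, I \le B \le 100\, I$ uniformly in $t$; undoing the conjugation yields $\tfrac{1}{100}\, \gamma_i^* \gamma_i \le A_0 \le 100\, \gamma_i^* \gamma_i$.

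To convert these matrix inequalities into the trace quantities that define $|g_i|^2$ and $|g_i^{-1}|^2$, I would invert the identity $(h^{-1})^* h^{-1} = A_0$ on $\partial R$ to get $h h^* = A_0^{-1}$, and use cyclic invariance of the trace:
\[
|g_i|^2 \;=\; \operatorname{tr}\bigl(h^* \gamma_i^* \gamma_i\, h\bigr) \;=\; \operatorname{tr}\bigl(\gamma_i A_0^{-1} \gamma_i^*\bigr),
\]
while directly $|g_i^{-1}|^2 = \operatorname{tr}\bigl((\gamma_i^{-1})^* A_0\, \gamma_i^{-1}\bigr) = \operatorname{tr}(B)$. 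The sandwich for $A_0$ just obtained, together with its inverse $\tfrac{1}{100}(\gamma_i^*\gamma_i)^{-1} \le A_0^{-1} \le 100(\gamma_i^*\gamma_i)^{-1}$, reduces both $\gamma_i A_0^{-1} \gamma_i^*$ and $B$ to matrices lying between $\tfrac{1}{100}\, I$ and $100\, I$, so monotonicity of the trace gives $\tfrac{2}{100} \le |g_i|^2,\, |g_i^{-1}|^2 \le 200$, which is the stated bound.

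The only mild obstacle is bookkeeping: separating the operator-norm bounds on $H_i^{\pm 1}$ from the Frobenius-type norm $|\cdot|$ that defines $|g_i|^2$, and making sure the cyclic trace rearrangement that replaces $h^*(\cdot) h$ with something involving $hh^* = A_0^{-1}$ is carried out in the right order. Once those reductions are organized, the lemma is essentially a short consequence of the convex-combination prescription of $A_0$ on the overlap pieces together with the standing hypothesis $\|H_i^{\pm 1}\| \le 10$.
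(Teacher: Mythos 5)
Your proposal is correct and follows essentially the same route as the paper: the boundary $\del R \cap U_i$ is split the same way, the $\del R \cap W_i$ piece is handled identically via $g_i^* g_i = I$, and the overlap pieces are handled using the same two ingredients (the convex-combination prescription of $(h^{-1})^* h^{-1}$ and the hypotheses $\norm{H_i} \le 10$, $\norm{H_i^{-1}} \le 10$). The only organizational difference is that the paper takes the trace of the identity $I = \te\, g_{i-1}^* g_{i-1} + (1-\te)\, g_i^* g_i$ and invokes the comparison of Lemma \ref{gComp}, whereas you establish the operator-order sandwich $\tfrac{1}{100}\ga_i^*\ga_i \le A_0 \le 100\,\ga_i^*\ga_i$ and push it through inversion, conjugation, and the trace; both yield the stated constants.
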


\begin{proof}
Since $g_i^* g_i = I$ on $\del R \cap W_i$ by construction, then $\abs{g_i}^2 = 2$ there.
On $\del R \cap \pr{U_{i-1} \cap U_i}$, we define $\pr{h^{-1}}^* h^{-1} = \te \ga_{i-1}^* \ga_{i-1} + \pr{1 - \te} \ga_i^* \ga_i$ for some $0 \le \te \le 1$, from which it follows that
$$I =  \te h^* \ga_{i-1}^* \ga_{i-1}h + \pr{1 - \te} h^* \ga_i^* \ga_i h.$$
Then
\begin{align*}
2 &= \tr\pr{I}
=  \te \tr\pr{h^* \ga_{i-1}^* \ga_{i-1}h} + \pr{1 - \te} \tr\pr{h^* \ga_i^* \ga_i h} \\
&\le \te 10^2 \tr\pr{h^* \ga_i^* \ga_i h} +\pr{1 - \te} \tr\pr{h^* \ga_i^* \ga_i h} ,
\end{align*}
where we have used the idea from the proof of Lemma \ref{gComp}.
Therefore, $\frac 2 {10^2} \le \abs{g_i}^2$ on $\del R \cap \pr{U_{i-1} \cap U_i}$.
And since
\begin{align*}
2 &= \tr\pr{I}
=  \te \tr\pr{h^* \ga_{i-1}^* \ga_{i-1}h} + \pr{1 - \te} \tr\pr{h^* \ga_i^* \ga_i h} \\
&\ge \te 10^{-2} \tr\pr{h^* \ga_i^* \ga_i h} +\pr{1 - \te} \tr\pr{h^* \ga_i^* \ga_i h} ,
\end{align*}
then $\frac 2 {10^2} \le \abs{g_i}^2 \le 2 \cdot 10^2$.
On $\del R \cap \pr{U_i \cap U_{i+1}}$, we can similarly show that $\frac 2 {10^2} \le \abs{g_i}^2 \le 2 \cdot 10^2$.
Combining these three bounds leads to the first estimate in the conclusion of the lemma.
An analogous argument shows that each $\abs{g_i^{-1}}$ satisfies the same bounds.
\end{proof}

To get interior bounds for $\abs{g_i}^2$ on $U_i$, we define and use a subharmonic function $v$.

\begin{lem}
\label{vDefLem}
For $i = 0, \ldots, i_0$, set
\begin{align*}
& c_i^+ = c_{i+1}^- = i \frac A \de \\
& b_i^+ = b_{i+1}^- = -\frac{i\pr{i+1}}{2} A - i B, 
\end{align*}
where $A = 10.5 \log 10$ and $B = 3 \log 10$.
Then the function defined piecewise by
\begin{equation}
\label{vDef}
v = \left\{ \begin{array}{ll}
\max\set{\abs{g_{i-1}}^2 e^{c_i^- x + b_i^-}, \abs{g_{i}}^2 e^{c_i^+ x + b_i^+}} & \text{ on } U_{i-1} \cap U_i \; \text{ for }\, i= 1, \ldots, i_0\\
\abs{g_{i}}^2 e^{c_i^+ x + b_i^+} & \text{ on } W_{i}  \; \text{ for }\, i = 0, \ldots, i_0
\end{array}\right.
\end{equation}
is continuous and subharmonic on $R$.
\end{lem}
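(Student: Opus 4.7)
The plan is to verify the two claimed properties of $v$ in turn on $R$. On the interior of each open piece of the partition (either an overlap region $U_{i-1} \cap U_i$ or a gap $W_i$) the definition is either a single smooth function or the max of two such functions, so continuity is transparent there; the only places where it needs checking are the vertical interfaces $x = i\de + \de/2$ (between $U_{i-1} \cap U_i$ and $W_i$) and $x = (i+1)\de$ (between $W_i$ and $U_i \cap U_{i+1}$). Both the continuity and the subharmonicity claims therefore reduce to careful analysis along these lines.

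For continuity at $x = i\de + \de/2$, the right-hand limit (from $W_i$) is $\abs{g_i}^2 e^{c_i^+ x + b_i^+}$, while the left-hand limit (from $U_{i-1} \cap U_i$) is $\max\set{\abs{g_{i-1}}^2 e^{c_i^- x + b_i^-},\, \abs{g_i}^2 e^{c_i^+ x + b_i^+}}$. These agree iff the $g_i$-term dominates the $g_{i-1}$-term at this interface. A direct computation using $c_i^- - c_i^+ = -A/\de$ and $b_i^- - b_i^+ = iA + B$ reduces the required inequality to $\abs{g_{i-1}}^2/\abs{g_i}^2 \le e^{A/2 - B}$; with $A = 10.5\log 10$ and $B = 3\log 10$ one has $e^{A/2 - B} = 10^{2.25} > 10^2$, so this follows from Lemma \ref{gComp}. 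The interface at $x = (i+1)\de$ is handled identically and reduces to $\abs{g_{i+1}}^2/\abs{g_i}^2 \le e^B = 10^3$, again covered by Lemma \ref{gComp}.

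For subharmonicity, I would first observe that each building block $\abs{g_i}^2 e^{cx + b}$ is subharmonic: since $cx + b = \Re\pr{cz} + b$, one has $\abs{g_i}^2 e^{cx + b} = e^b \sum_{j,k} \abs{(g_i)_{jk}\, e^{cz/2}}^2$, a sum of moduli-squared of holomorphic functions. On the interior of each open piece $v$ is therefore either such a single function (on $W_i$) or the max of two (on an overlap), hence subharmonic. At an interface point $z_0$, I would choose a small disk $B \su U_i$ about $z_0$ and use continuity together with the comparison above to get $v \ge \abs{g_i}^2 e^{c_i^+ x + b_i^+}$ pointwise on $B$ with equality at $z_0$. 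The mean-value inequality for the dominating subharmonic function then yields $v(z_0) \le$ average of $v$ over $B$, which is the mean-value inequality for $v$.

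The main obstacle is the index bookkeeping: tracking the staircase of exponents $\pr{c_i^\pm, b_i^\pm}$ and confirming that the explicit choices $A = 10.5 \log 10$ and $B = 3\log 10$ give exactly enough margin to absorb the $10^2$ ratios from Lemma \ref{gComp} at both interfaces. The subharmonicity of the building blocks and the local mean-value argument at the interfaces are essentially standard.
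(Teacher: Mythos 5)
Your proposal is correct and follows essentially the same route as the paper: the same interface-by-interface analysis, the same reduction to the ratio bounds of Lemma \ref{gComp} using the explicit values of $A$ and $B$ (your margins $e^{A/2-B}=10^{2.25}$ and $e^{B}=10^{3}$ are exactly the ones the paper extracts on one-sided neighborhoods of the interfaces). The only cosmetic differences are that you justify subharmonicity of the building blocks by writing them as sums of $\abs{f}^2$ with $f$ holomorphic rather than as exponentials of continuous subharmonic functions, and that you verify the sub-mean-value inequality directly at interface points instead of showing, as the paper does, that the max collapses to the matching term on a full neighborhood of each interface.
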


\begin{proof}
Recall that if $f = e^\phi$, where $\phi$ is continuous and subharmonic, then so too is $f$.
Since $\log \abs{g}$ is subharmonic whenever $g$ is analytic and $cx + b$ is harmonic, then for any analytic $g$, $\log\pr{\abs{g}^2 e^{c x + b}} = 2 \log \abs{g} + c x + b$ is continuous and subharmonic.
Since each $g_i$ is analytic, then every function used to define $v$ is subharmonic.
In particular, $v$ is continuous and subharmonic on each $W_i$.
Moreover, since the maximum of two continuous subharmonic functions is the same, then $v$ is also continuous and subharmonic on each $U_{i-1} \cap U_i$.
It remains to show that we have compatibility along the boundaries of each $W_i$ and $U_{i-1} \cap U_i$.

For $i = 1, \ldots, i_0$, set $x_i = i \de + \frac 1 4 \de$, $x_i^- = i \de$, and $x_i^+ = i \de + \frac 1 2 \de$.
If we additionally define $x_0^+ = 0$ and $x_{i_0+1}^- = 1$, note that 
$$U_{i-1} \cap U_i = \brac{x_i^-, x_i^+} \times \brac{0, 1}$$ 
and 
$$W_i = \brac{x_i^+, x_{i+1}^-} \times \brac{0, 1}.$$
If $x$ is near $x_i^-$, then $x$ is near $W_{i-1} \cap \pr{U_{i-1} \cap U_i}$ and since $c_{i-1}^+ = c_{i}^-$ and $b_{i-1}^+= b_{i}^-$, then we want $v\pr{x, y} = \abs{g_{i-1}}^2 e^{c_i^- x + b_i^-}$ in this region.
Similarly, if $x$ is near $x_i^+$, then we need to show that $v\pr{x, y} = \abs{g_i}^2 e^{c_i^+ x + b_i^+}$.

By Lemma \ref{gComp}
\begin{align*}
\abs{g_{i}}^2 e^{c_i^+ x + b_i^+}
&\le 10^2 \abs{g_{i-1}}^2 e^{c_i^+ x + b_i^+}
=  \abs{g_{i-1}}^2 e^{c_i^- x + b_i^-} 10^2 e^{\pr{c_i^+ - c_i^-} x + \pr{b_i^+- b_i^-} }. 
\end{align*}
If $x \in [x_i^-, x_i^- + \eps \de)$ for some $\eps > 0$, then
\begin{align*}
\pr{c_i^+ - c_i^-} x + \pr{b_i^+- b_i^-}
&= \frac A \de x - \pr{B + i A}
< \frac A \de \pr{i \de + \eps \de} - \pr{B + i A}  
= A \eps - B.
\end{align*}
Assuming that $\eps \le \frac 1 {10.5}$, we have $10^2 e^{ A \eps - B} = 10^2 e^{ \pr{10.5 \eps - 3} \log 10} \le 1$
and we conclude that 
\begin{align*}
\abs{g_{i}}^2 e^{c_i^+ x + b_i^+}
&<  \abs{g_{i-1}}^2 e^{c_i^- x + b_i^-}.
\end{align*}
Therefore, when $x \in [x_i^-, x_i^- + \frac 1 {10.5} \de)$, $\max\set{\abs{g_{i-1}}^2 e^{c_i^- x + b_i^-}, \abs{g_{i}}^2 e^{c_i^+ x + b_i^+}} = \abs{g_{i-1}}^2 e^{c_i^- x + b_i^-}$, proving that $v$ is continuous along $W_{i-1} \cap \pr{U_{i-1} \cap U_i}$.

We repeat the argument near the boundary of $W_{i} \cap \pr{U_{i-1} \cap U_i}$.
By Lemma \ref{gComp}
\begin{align*}
\abs{g_{i-1}}^2 e^{c_i^- x + b_i^-}
&\le 10^2 \abs{g_{i}}^2 e^{c_i^- x + b_i^-}
=  \abs{g_{i}}^2 e^{c_i^+ x + b_i^+} 10^2 e^{-\brac{\pr{c_i^+ - c_i^-} x + \pr{b_i^+- b_i^-} }}. 
\end{align*}
If $x \in (x_i^+ - \eps \de, x_i^+]$ for some $\eps > 0$, then
\begin{align*}
\pr{c_i^+ - c_i^-} x + \pr{b_i^+- b_i^-}
&= \frac A \de x - \pr{B + i A}
> \frac A \de \pr{i \de + \frac 1 2 \de - \eps \de} - \pr{B + i A}  
= \frac A 2 - A \eps - B.
\end{align*}
Assuming that $\eps \le \frac{11} {21}$, we have $10^2 e^{ A \eps + B - \frac A 2} = 10^2 e^{ \pr{10.5 \eps + 3 - 10.5} \log 10} \le 1$ and we conclude that 
\begin{align*}
\abs{g_{i}}^2 e^{c_i^+ x + b_i^+}
&<  \abs{g_{i-1}}^2 e^{c_i^- x + b_i^-}.
\end{align*}
Therefore, when $x \in (x_i^+ - \frac{11} {21} \de, x_i^+]$, $\max\set{\abs{g_{i-1}}^2 e^{c_i^- x + b_i^-}, \abs{g_{i}}^2 e^{c_i^+ x + b_i^+}} = \abs{g_{i}}^2 e^{c_i^+ x + b_i^+} $, proving that $v$ is continuous along $W_{i} \cap \pr{U_{i-1} \cap U_i}$ as well and completing the proof.
\end{proof}

Now we'll use the maximum principle to estimate $v$ in terms of $\disp \max_{\del R}v$.

\begin{lem}
\label{vBd}
For the function $v$ as defined in \eqref{vDef}, there exists a universal constant $C > 0$ so that $v \le C e^{C/\de^2}$ in $R$.
\end{lem}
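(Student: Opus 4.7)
The plan is to apply the maximum principle: by Lemma \ref{vDefLem}, $v$ is continuous and subharmonic on the closed rectangle $R$, so $\sup_R v = \max_{\del R} v$, and it suffices to bound $v$ on $\del R$ by $C e^{C/\de^2}$.

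On the boundary I would split into pieces according to the definition of $v$ in \eqref{vDef}. On $\del R \cap W_i$, $v = \abs{g_i}^2 e^{c_i^+ x + b_i^+}$; on $\del R \cap (U_{i-1} \cap U_i)$, $v$ is the maximum of two analogous expressions. Lemma \ref{gEst} gives the uniform bound $\abs{g_i}^2 \le 2 \cdot 10^2$ on the boundary, so everything reduces to estimating the exponent $c_i^{\pm} x + b_i^{\pm}$ uniformly in $i \in \set{0, \ldots, i_0}$ and $x \in [0,1]$.

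For the exponent I would simply plug in the explicit values $c_i^+ = iA/\de$ and $b_i^+ = -\frac{i(i+1)}{2} A - iB$, and use $x \le i\de + \tfrac{3}{2}\de$ on $U_i$, which produces a bound of the form $\frac{i^2 A}{2} + O(i)$. Combined with $i \le i_0 \le 1/\de$, this gives an exponent of size $O(1/\de^2)$, so $v\big|_{\del R} \le C e^{C/\de^2}$, and the same bound then holds in $R$ by the maximum principle. The ``$-$'' variant is just a shift of the ``$+$'' indices and is handled identically.

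I do not foresee any genuine obstacle. The subtle subharmonic patching at the seams of the $U_i$'s has already been dealt with in Lemma \ref{vDefLem}, and what remains is a routine estimate exploiting the telescoping design of the pair $(c_i^+, b_i^+)$: the quadratic-in-$i$ contribution from $b_i^+$ was chosen precisely so as to absorb the linear-in-$i$ growth coming from $c_i^+ x = iAx/\de$, leaving behind only the final $\tfrac{A}{2\de^2}$ term, which dictates the final exponent of $v$.
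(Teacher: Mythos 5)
Your proposal is correct and follows essentially the same route as the paper: the maximum principle applied to the subharmonic function $v$ from Lemma \ref{vDefLem}, boundary control of $\abs{g_i}^2$ via Lemma \ref{gEst}, and the explicit computation showing that the telescoping choice of $(c_i^{\pm}, b_i^{\pm})$ leaves an exponent of size $O(i_0^2) = O(1/\de^2)$. The only cosmetic difference is that the paper uses the exact identity $\abs{g_i}^2 = 2$ on $\del R \cap W_i$ where you use the weaker uniform bound $2\cdot 10^2$, which merely changes the constant $C$.
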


\begin{proof}
We start with $\del R \cap W_i$.
Since $v = \abs{g_i}^2 e^{c_i^+ x + b_i^+}$ on $W_i$, and $\abs{g_i}^2 = 2$ on $\del R \cap W_i$, then 
$$v = 2 e^{i \frac A \de x - \frac{i\pr{i+1}}{2} A - iB} \le 2 e^{i \frac A \de x_{i+1}^- - \frac{i\pr{i+1}}{2} A - iB} = 2 e^{ \frac{i\pr{i+1}}{2} A - iB} \; \text{ on } \; \del R \cap W_i.$$
Taking a supremum over $i \in \set{0, \ldots, i_0}$, we see that
\begin{equation}
\label{WBdyEst}
v \le 2 e^{ \frac{i_0\pr{i_0+1}}{2} A - i_0 B} \; \text{ on } \; \del R \cap \pr{\bigcup_{i=0}^{i_0}W_i}.
\end{equation}
Next we examine $v$ on $\del R \cap \pr{U_{i-1} \cap U_i}$.
There we have $v = \max\set{\abs{g_{i-1}}^2 e^{c_i^- x + b_i^-}, \abs{g_{i}}^2 e^{c_i^+ x + b_i^+}}$ and $x_i^- \le x \le x_i^+$.
By Lemma \ref{gEst}, $\abs{g_{i-1}}^2 \le 2 \cdot 10^2$ and $\abs{g_{i}}^2 \le 2 \cdot 10^2$ in $U_{i-1} \cap U_i$.
Examining the exponentials, we get 
\begin{align*}
\max\set{e^{c_i^- x + b_i^-}, e^{c_i^+ x + b_i^+}} 
&\le \max\set{e^{c_i^- x_i^+ + b_i^-}, e^{c_{i+1}^- x_i^+ + b_{i+1}^-}} \\
&= \max\set{e^{ \frac{i^2-1} 2A  - \pr{i-1} B}, e^{ \frac{i^2} 2 A  -  i B}}
= e^{ \frac{i^2} 2 A  -  i B},
\end{align*}
since $A = 10.5 \log 10$ and $B = 3 \log 10$.
Therefore,
$$v \le 2 \cdot 10^2 e^{ \frac{i^2} 2 A  -  i B} \; \text{ on } \; \del R \cap \pr{U_{i-1} \cap U_i}.$$
Taking a supremum over $i \in \set{1, \ldots, i_0}$, we see that
\begin{equation}
\label{UBdyEst}
v \le 2 \cdot 10^2 e^{ \frac{i_0^2} 2 A  -  i_0 B} \; \text{ on } \; \del R \cap \brac{\bigcup_{i=1}^{i_0} \pr{U_{i-1} \cap U_i}}.
\end{equation}
Combining \eqref{WBdyEst} and \eqref{UBdyEst} shows that $v \le 2 e^{ \frac{i_0\pr{i_0+1}}{2} A - i_0 B}$ on $\del R$.
Since $i_0 = \frac 1 \de - \frac 3 2$, then the conclusion of the lemma follows from an application of the maximum principle.
\end{proof}

Now we have all of the preliminary results required to prove Proposition \ref{matrixEstProp}.

\begin{proof}[Proof of Proposition \ref{matrixEstProp}]
By Theorem \ref{CarLem}, there exists a collection $\set{\ga_i}_{i=1}^{i_0}$ of analytic function, where each $\ga_i$ is defined on $U_i$ and satisfies $H_i = \ga_{i-1} \ga_i^{-1}$ on $U_{i-1} \cap U_i$.

On each $\del R \cap W_i$, define $h$  so that $\pr{h^{-1}}^* h^{-1} = \ga_i^* \ga_i$ there.
Then on each $\del R \cap \pr{U_{i-1} \cap U_i}$, define $\pr{h^{-1}}^* h^{-1}$ to be a convex combination of $\ga_{i-1}^* \ga_{i-1}$ and $\ga_i^* \ga_i$.
Since $\pr{h^{-1}}^* h^{-1}$ is defined unambiguously on $\del R$, an application of Theorem \ref{WMThm} implies that there exists an analytic function function $h^{-1}$ defined in $R$.

On each $U_i$, define $g_i = \ga_i h$.
Then each $g_i$ is defined and analytic on $U_i$ with $g_{i-1} g_i^{-1} = \ga_{i-1} h h^{-1} \ga_i^{-1} = \ga_{i-1} \ga_i^{-1} = H_i$ on $U_{i-1} \cap U_i$.

For any $i \in  \set{0, 1, \ldots, i_0}$, $\abs{g_i}^2 e^{c_i^+ x + b_i^+} \le v$ on $U_i$ and it follows from Lemma \ref{vBd} and the definition of $v$ as given in \eqref{vDef} that 
$$\abs{g_i\pr{x,y}}^2 \le v\pr{x, y} e^{- c_i^+ x - b_i^+} \le v\pr{x, y} e^{- c_{i+1}^- x - b_{i+1}^-} \le C e^{C/\de^2} \; \text{ in } U_i.$$
A similar argument may be made for $g_i^{-1}$, completing the proof.
\end{proof}

\def\cprime{$'$}


\begin{thebibliography}{DKW17}

\bibitem[BK05]{BK05}
Jean Bourgain and Carlos~E. Kenig.
\newblock On localization in the continuous {A}nderson-{B}ernoulli model in
  higher dimension.
\newblock {\em Invent. Math.}, 161(2):389--426, 2005.

\bibitem[BR03]{BR03}
B.~Berndtsson and J.-P. Rosay.
\newblock Quasi-isometric vector bundles and bounded factorization of
  holomorphic matrices.
\newblock {\em Ann. Inst. Fourier (Grenoble)}, 53(3):885--901, 2003.

\bibitem[Dav14]{Dav14}
Blair Davey.
\newblock Some quantitative unique continuation results for eigenfunctions of
  the magnetic {S}chr\"odinger operator.
\newblock {\em Comm. Partial Differential Equations}, 39(5):876--945, 2014.

\bibitem[DKW17]{DKW17}
Blair Davey, Carlos Kenig, and Jenn-Nan Wang.
\newblock The {L}andis conjecture for variable coefficient second-order
  elliptic {PDE}s.
\newblock {\em Trans. Amer. Math. Soc.}, 369(11):8209--8237, 2017.

\bibitem[DW17]{DW17}
Blair Davey and Jenn-Nan Wang.
\newblock Landis' conjecture for general second order elliptic equations with
  singular lower order terms in the plane.
\newblock arXiv:1709.09042, 2017.

\bibitem[Gia83]{Gi83}
Mariano Giaquinta.
\newblock {\em Multiple integrals in the calculus of variations and nonlinear
  elliptic systems}, volume 105 of {\em Annals of Mathematics Studies}.
\newblock Princeton University Press, Princeton, NJ, 1983.

\bibitem[GT01]{GT01}
David Gilbarg and Neil~S. Trudinger.
\newblock {\em Elliptic partial differential equations of second order}.
\newblock Classics in Mathematics. Springer-Verlag, Berlin, 2001.
\newblock Reprint of the 1998 edition.

\bibitem[KSW15]{KSW15}
Carlos Kenig, Luis Silvestre, and Jenn-Nan Wang.
\newblock On {L}andis' {C}onjecture in the {P}lane.
\newblock {\em Comm. Partial Differential Equations}, 40(4):766--789, 2015.

\bibitem[LW14]{LW14}
Ching-Lung Lin and Jenn-Nan Wang.
\newblock Quantitative uniqueness estimates for the general second order
  elliptic equations.
\newblock {\em J. Funct. Anal.}, 266(8):5108--5125, 2014.

\bibitem[Mal84]{Mal58}
B.~Malgrange.
\newblock {\em Lectures on the theory of functions of several complex
  variables}, volume~13 of {\em Tata Institute of Fundamental Research Lectures
  on Mathematics and Physics}.
\newblock Distributed for the Tata Institute of Fundamental Research, Bombay;
  by Springer-Verlag, Berlin, 1984.
\newblock Reprint of the 1958 edition, Notes by Raghavan Narasimhan.

\bibitem[WM57]{WM57}
N.~Wiener and P.~Masani.
\newblock The prediction theory of multivariate stochastic processes. {I}.
  {T}he regularity condition.
\newblock {\em Acta Math.}, 98:111--150, 1957.

\end{thebibliography}
\end{document}